\newcommand{\R}{\mathbb{R}} 
\newcommand{\N}{\mathbb{N}}
\newcommand{\Z}{\mathbb{Z}}
\newcommand{\sign}{\mathop\mathrm{sign}\nolimits}
\newcommand{\cl}[1]{\overline{#1}}
\newcommand{\D}{\partial}
\newcommand{\X}{\times} 
\newcommand{\h}[1]{\widehat{#1}} 
\newcommand{\su}[1]{\overline{#1}}
\newcommand{\rank}{\mathop\mathrm{rank}\nolimits}
\newcommand{\im}{\mathop\mathrm{im}\nolimits}
\newcommand{\rmx}{\mathrm{x}}
\newcommand{\xbf}{\boldsymbol{\mathrm{x}}}
\newcommand{\rkrs}{\mathbb{R}^k\times\mathbb{R}^s}
\newcommand{\ctrkrs}{C_T (\R^k\times\R^s)}
\newtheorem{theorem}{Theorem}[section]
\newtheorem{corollary}[theorem]{Corollary}
\newtheorem{lemma}[theorem]{Lemma}
\newtheorem{example}[theorem]{Example}
\newtheorem{proposition}[theorem]{Proposition}
\newtheorem{remark}[theorem]{Remark}
\numberwithin{equation}{section}
\begin{document}

\title[On a class of nonautonomous differential-algebraic equations]
{Harmonic solutions to a class of differential-algebraic equations
  with separated variables}

\author{Luca Bisconti} \address[L.\ Bisconti]{Dipartimento di Sistemi
  e Informatica, Universit\`a di Firenze, Via Santa Marta 3, 50139
  Firenze, Italy} \email{luca.bisconti@unifi.it}

\begin{abstract} We study the set of $T$-periodic solutions of a class
  of $T$-periodically perturbed Differential-Algebraic Equations with
  separated variables. Under suitable hypotheses, these equations are
  equivalent to separated variables ODEs on a manifold.  By combining
  known results on Differential-Algebraic Equations, with an argument
  about ODEs on manifolds, we obtain a global continuation result for
  the $T$-periodic solutions to the considered equations.  As an
  application of our method, a multiplicity result is provided.
\end{abstract}

\keywords{Differential-algebraic equations, ordinary differential
  equations on manifolds, degree of a vector field, periodic
  solutions.}  \subjclass[2000]{34A09, 34C25, 34C40}

\maketitle
\section{Introduction} This paper is concerned with studying
properties of the set of harmonic solutions to periodic perturbations
of a class of $T$-periodic \emph{Differential-Algebraic Equations}
(DAEs). More precisely, we will consider the following separated
variables DAE in semi-explicit form
\begin{equation} \label{SVDAEs:0} \left\{ \begin{array}{l} \dot x =
      a(t)f(x, y), \\ g(x, y) = 0, \\
    \end{array} \right.
\end{equation} where $f\colon U\to\R^k$ and $g\colon U\to\R^s$ are
given continuous maps defined on an open connected set $U\subseteq
\rkrs$ and $a : \R \to \R$ is continuous.  Assume that $g\in
C^{\infty}(U, \R\sp{s})$, with the property that the
Jacobian matrix $\D_2 g (p,q)$ of $g$, with respect to the last $s$
variables, is invertible for any $(p,q)\in U$.  
Given $T > 0$, we also assume that the function $a$ is $T$-periodic and
such that
\begin{equation} \label{SVDAEs:average-nn} \frac{1}{T}\int^T_0 a(t)
  dt\ne 0,
\end{equation} and we consider the perturbed problem
\begin{equation} \label{SVDAEs:0a} \left\{ \begin{array}{l} \dot x =
      a(t)f(x, y) +\lambda h(t, x, y), \,\,\, \lambda\geq 0,\\
      g(x, y) = 0, \\
    \end{array} \right.
\end{equation} 
where $h\colon\R\X U\to\R^k$ is continuous and $T$-periodic in the
first variable.  Here, the function $a$ has the meaning of a
perturbation factor and, up to dividing $a$ by its average, we can
assume that $a = 1 + \alpha $, where $\alpha : \R \to \R$ is
continuous, $T$-periodic and with zero average. Sometimes, for
convenience, we will refer to the pair of maps $(\alpha, h)$ as the
$T$-\emph{perturbation pair}.

Since $\D_2 g(p,q)$ is invertible for any $(p, q)\in U$, then $0$ is a
regular value for $g$, and so $g^{-1}(0)$ is a closed $C^{\infty}$
submanifold of $U$ of dimension $k$. Throughout this article the
letter $M$ will be used to denote the submanifold $g^{-1}(0)$ and, if
not otherwise stated, the points of $M$ will be written as pairs
$(p,q)\in M$.

As a direct consequence of the Implicit Function Theorem, $M$ can be
locally represented as a graph of some map from an open subset of
$\R^k$ to $\R^s$ and, hence, Equation \eqref{SVDAEs:0a} can be locally
decoupled. However, globally, this might be false or not convenient
for our purpose (see, e.g. \cite{cala, spaDAE}).  For instance, even
in the case when $M$ is a graph of some map $\gamma$, the analytical
expression of $\gamma$ might be so complicated that the decoupled
version of \eqref{SVDAEs:0a} turns out impractical.

Following the ideas in \cite{CaSp, spaDAE} (see also \cite{RR}), we
will show that the Equation \eqref{SVDAEs:0a} is equivalent to an ODE
on $M$. Therefore, to study the $T$-periodic solutions to
\eqref{SVDAEs:0a} we will be able to use topological methods based on
the degree of tangent vector fields, as well as results on periodic
solutions to ODEs on manifolds.

Our aim is to get information on the structure of the set of solutions
to \eqref{SVDAEs:0a}.  Thus, denoted by $C_T(U)$ the metric subspace
of $C_T (\rkrs)$ of all the continuous $T$-periodic functions taking
values in $U$, we will give conditions ensuring the existence of a
connected set of pairs $(\lambda, (x, y))\in [0, \infty)\X C_T(U)$,
with $\lambda> 0$ and $(x, y)$ a $T$-periodic solution to
\eqref{SVDAEs:0a}, whose closure is not compact and meets the set of
pairs formed by constant solutions corresponding to $\lambda=0$ (the
so called \emph{trivial} pairs).  \medskip

\noindent\textbf{Plan of the paper.}  In Section \ref{SVDAEs:section2}
we introduce some preliminaries and we give an overview of our
approach to treat the above class of DAEs.  In section
\ref{SVDAEs:section3}, proceeding as in \cite{spaSepVar, spaDAE}, we
obtain information about the set of $T$-pairs to \eqref{SVDAEs:0a};
examples of applications of our method are given.  Thereafter, in
Section \ref{SVDAEs:section4}, a multiplicity result for the Equation
\eqref{SVDAEs:0a} is provided.

\section{Preliminaries and basic notions} \label{SVDAEs:section2} We
first recall some facts and definitions about the function spaces used
in the sequel.  Let $I \subseteq \R$ be an interval and $V \subseteq
\rkrs$ be open.  Given $r \in \N$, the set of all $V$-valued
$C^r$-functions defined on $I$ is denoted by $C^r(I,V)$.  Frequently,
we also use the notation $C(I,V)$ in place of $C^0(I,V)$ and, when
$I=\R$, we write $C^r(V)=C^r(\R, V)$.  We will denote by the symbol
$C_T (V)$ the metric subspace of the Banach space $C_T (\rkrs)$ of all
the continuous $T$-periodic functions assuming values in $V$.

Let us now give the precise concept of solution to equations of the
form \eqref{SVDAEs:0a}.  Let $U \subseteq \rkrs$ be open and
connected, let $g : U \to \R^s$, $f : U \to \R^k$, $a \colon \R\to \R$
and $h\colon\R\times U\to\R\sp{k}$ be given continuous maps, where
$g\in C^{\infty}(U, \R\sp{s})$ has the property that $\D_2 g (p,q)$ is
invertible for any $(p,q)\in U$.  For a given $\lambda\geq 0$, a
solution of \eqref{SVDAEs:0a} consists of a pair of functions $(x,
y)\in C^1(I,U)$, with $I\subseteq\R$ an open interval, such that
\begin{equation*} \left\{
    \begin{array}{l} \dot x(t) = a(t)f\big(x(t), y(t)\big) + \lambda
      h(t, x(t), y(t)), \\ g(x(t), y(t))=0,
    \end{array} \right.
\end{equation*} for all $t\in I$. 
Existence and uniqueness results for the initial value problems
related to DAEs of type \eqref{SVDAEs:0a} will be discussed in 
the next section:
they will be deduced as consequences of known theorems about 
ODEs on manifolds.

Let $(x,y)$ be a solution of \eqref{SVDAEs:0a} defined on an interval
$I$, and corresponding to a given $\lambda\geq 0$. We say that $(z,
w)$ is a continuation (or extension) of $(x, y)$ if there exists an
interval $\tilde{I}\supset I$, such that $(z, w)$ coincides with $(x,
y)$ on $I$, and $(z, w)$ satisfies \eqref{SVDAEs:0a}. A solution $(x,
y)$ for which there are no extensions is called \emph{maximal}.  As in
the case of ODEs, Zorn's lemma implies that any solution is the
restriction of a maximal solution.

\begin{remark} \label{first-remark} Let $U\subseteq \rkrs$ be open and
  connected.  Consider the following initial value problems
  \begin{equation} \label{SVDAEs:01aaa} \emph{\text{(A)}}\quad
    \left\{ \begin{array}{l} \dot x = a(t)f(x, y), \\ g(x, y)=0, \\
        (x(0),y(0))=(x_0,y_0),
      \end{array} \right.  \quad \text{and}\quad\,\,
    \emph{(A}^{\prime}\emph{)} \quad \left\{ \begin{array}{l} \dot x =
        f(x, y), \\ g(x, y)=0, \\ (x(0),y(0))=(x_0,y_0),
      \end{array} \right.\!\!
  \end{equation} 
  where $(x_0,y_0)\in U$, $f : U \to \R^k$, $g : U \to \R^s$ are
  defined above, and the map $a : \R \to \R$ is continuous,
  $T$-periodic with $1/T \int_0^T a(t) dt = 1$.  Suppose also that $f$
  is $C^1$, so that the uniqueness of solutions for the problems $
  \emph{(A)}$ and $\emph{(A}^{\prime}\emph{)}$ is guaranteed (see
  below for details). Let $(\xi, \sigma) : J \to U$ and $(u,v) : I \to
  U$ be the maximal solutions of \emph{(A)} and
  \emph{($\text{A}^\prime$)} respectively, with $I$ and $J$ the
  related maximal intervals of existence.

  Let $t>0$ be such that $\int_0^{l} a(s) ds \in I$ for all $l\in [0,
  t]$, then it follows that
  \begin{equation*} \label{SVDAEs:02} (\xi(t), \sigma(t)) = \bigg(
    u\Big(\int_0^t a(s) ds\Big), v\Big(\int_0^t a(s) ds\Big)\bigg),
  \end{equation*} 
  and hence $t \in J$.  Conversely, by using a standard maximality
  argument, one can prove that $t \in J$ implies $\int_0^ta(s)ds\in
  I$.  Define the map $\phi_a: J\to I$, $t\mapsto
  \phi_a(t)=\int^t_0a(s)ds$.  Notice that, if $T\in J$, then
  $\phi_a(T)= T \in I$, and so $(\xi(T), \sigma(T)) = (x(T), y(T))$.
\end{remark}

The argument in Remark \ref{first-remark} plays a fundamental role in
our approach to treat Equation \eqref{SVDAEs:0a}.  As we will formally
prove, the study of the $T$-periodic solutions to \eqref{SVDAEs:0a}
can be reduced to that of the $T$-periodic solutions for a particular
autonomous semi-explicit DAE.  As a consequence, the main results in
\cite{spaDAE} can be applied to the present case.  \smallskip

\subsection{Associated vector Fields and ODEs on $M$}
Proceeding as in \cite{cala, spaDAE}, we associate to
\eqref{SVDAEs:0a} an ODE on $g^{-1}(0)=M$.

Let us first recall that, given a differentiable manifold
$N\subseteq\R^n$, a continuous map $w\colon N\to\R^n$ with the
property that for any $p\in N$, $w(p)$ belongs to the tangent space
$T_pN$ of $N$ at $p$ is called a \emph{tangent vector field on $N$}.

With our hypotheses it is always possible to associate a pair $\Psi,
\Upsilon$ of tangent vector fields on $M$ to the functions $f$ and $h$
in \eqref{SVDAEs:0a}. In fact, consider the maps $\Psi\colon
M\to\rkrs$ and $\Upsilon \colon \R\X M \to \rkrs$ defined as follows:
\begin{subequations}\label{campiv}
  \begin{align} &\Psi (p, q)= \big( f(p, q), -[\D_2 g(p, q)]^{-1} \D_1
    g (p, q) f(p, q) \big),\,\, \textrm{and} \label{SVDAEs:03} \\
    &\Upsilon (t, p, q)= \big( h(t, p, q), -[\D_2 g( p, q )]^{-1} \D_1
    g (p, q) h(t, p, q) \big). \label{SVDAEs:04}
  \end{align}
\end{subequations} Given a point $(p,q)\in M\subseteq\rkrs$, since
$T_{(p,q)}M$ coincides with the kernel $\ker d_{(p,q)}g$ of the
differential $d_{(p,q)}g$ of $g$ at $(p, q)$, it can be easily seen
that $\Psi$ is tangent to $M$ in the sense that $\Psi(p,q)$ belongs to
$T_{(p,q)}M$ for all $(p,q)\in M$ (see, e.g.\
\cite{spaDAE}). Analogously, the time-dependent vector field
$\Upsilon$ is tangent to $M$, i.e. $\Upsilon(t, p, q)\in T_{(p, q)}M$,
for all $(t, p, q)\in\R\X M$.

We claim that \eqref{SVDAEs:0a} is equivalent to the following ODE on
$M$, which implicitly keeps track of the condition $g(x,y)=0$:
\begin{equation} \label{SVDAEs:eq-su-M} \dot \zeta = a(t)\Psi (\zeta)
  + \lambda \Upsilon (t, \zeta),
\end{equation} meaning that $\zeta=(x,y)$ is a solution of
\eqref{SVDAEs:eq-su-M}, in an interval $I\subseteq\R$, if and only if so
is $(x,y)$ for \eqref{SVDAEs:0a}.  
We prove the claim:
for a given $\lambda>0$, let $(x, y)$ be a solution to
\eqref{SVDAEs:0a} defined on $I\subseteq \R$. Differentiating the
algebraic condition $g\big(x(t),y(t)\big)=0$, one obtains
\begin{equation*} 0=\partial_1 g(x(t),y(t))\dot x (t)+ \partial_2
  g(x(t),y(t))\dot y (t),
\end{equation*} whence
\begin{equation*} \dot y(t) = -[\D_2 g(x(t), y(t))]^{-1} \D_1 g (x(t),
  y(t)) \left[ a(t)f(x(t), y(t)) + \lambda h(t, x(t), y(t))\right],
\end{equation*} with $t\in I$. Then, the solutions of
\eqref{SVDAEs:0a} correspond to those of \eqref{SVDAEs:eq-su-M}.
Conversely, if $\zeta=(x,y)$ is a solution of \eqref{SVDAEs:eq-su-M}
defined on an interval $I\subseteq \R$, then it satisfies identically
$\big(x(t),y(t)\big)\in M$, which implies $g(x(t),y(t))=0$, and
fulfills
\begin{equation*} \dot\zeta(t)=a(t)\Psi\big(\zeta(t)\big)+\lambda
  \Upsilon (t,\zeta(t)),\,\, \forall t\in \R,
\end{equation*} whose first component is \eqref{SVDAEs:0a}. 
\smallskip

Now, consider the unperturbed version of Equation
\eqref{SVDAEs:eq-su-M}:
\begin{equation} \label{SVDAEs:eq-su-M-unpert} \dot \zeta = a(t)\Psi
  (\zeta).
\end{equation}
By the definition of the vector field $\Psi$, if $f$ is $C^1$, then
$\Psi$ is $C^1$ too. This condition ensures that any initial value
problem associated to \eqref{SVDAEs:eq-su-M-unpert} admits a unique
solution.  Then, as a consequence of the equivalence of
\eqref{SVDAEs:0} with \eqref{SVDAEs:eq-su-M-unpert}, the local results
on existence, uniqueness and continuous dependence of local solutions
of the initial value problems translate to \eqref{SVDAEs:0} from the
theory of ODEs on manifolds (see, e.g. \cite{Lang-1}).  Observe that,
if also $h$ is $C^1$, a similar statement holds for \eqref{SVDAEs:0a}
and \eqref{SVDAEs:eq-su-M}. \medskip

Let $N\subseteq \R^n$ be a differentiable manifold and let $\Xi \colon
\R\X N \to \R^n$ be a time-dependent tangent vector field sufficiently
regular in order to guarantee the existence and uniqueness of the
solutions for the initial value problems associated to the
differential equation
\begin{equation}
  \label{SVDAEs:eq-test}
  \dot \zeta =\Xi (t, \zeta),\,\,\, t\in\R.
\end{equation} 
Denote by
\begin{equation*}
  \begin{aligned}
    \mathcal{D}=\big\{ (\tau, p) \in \R \X N \colon \,\, \textrm{the
      solution of}\,\, & \eqref{SVDAEs:eq-test} \,\, \textrm{which
      satisfies}\,\,
    \zeta(0)=p\\
    &\textrm{is continuable at least up to}\,\, t=\tau \big\}.
  \end{aligned}
\end{equation*}
A well known argument based on some global continuation properties of
the flows (see, e.g. \cite{Lang-1, Lang-2}) shows that $\mathcal{D}$
is an open set containing $\{0\}\X M$.  Let $P^\Xi \colon \mathcal{D} \to
N$ be the map that associates to each $(t, p)\in \mathcal{D}$ the
value $\zeta (t)$ of the maximal solution $\zeta$ to
\eqref{SVDAEs:eq-test} such that $\zeta (0) = p$ (i.e.  $ P^\Xi (t, p)
= \zeta (t) $).  Here and in the sequel, we will denote by
$P^\Xi_\tau, \tau\in \R$, the local (Poincar\'e) $\tau$-translation
operator associated to the equation \eqref{SVDAEs:eq-test}.  It holds
true that $P^\Xi_\tau(p)=P^\Xi(\tau, p)$, with $(\tau,p)\in
\mathcal{D}$.  Therefore, the domain of $P^\Xi_\tau$ is an open set
formed by the points $p \in N$ for which the maximal solution of
\eqref{SVDAEs:eq-test}, starting from $p$ at $t = 0$, is defined up to
$\tau$.

\begin{remark} \label{SVDAEs:remark-on-ivp} By virtue of the
  equivalence of \eqref{SVDAEs:eq-su-M-unpert} with \eqref{SVDAEs:0},
  the initial value problems $\emph{(A)}$ and $\emph{(A}'\emph{)}$ in
  \eqref{SVDAEs:01aaa} respectively become:
  \begin{equation*} \label{SVDAEs:equivalence-on-manifold}
    \emph{\text{(B)}}\quad \left\{ \begin{array}{l} \dot \zeta =
        a(t)\Psi(\zeta), \\ \zeta(0)=\zeta_0,
      \end{array} \right.  \quad \text{and}\quad\,\,
    \emph{\text{(B}}'\emph{\text{)}} \quad \left\{ \begin{array}{l} \dot
        \zeta = \Psi(\zeta), \\ \zeta(0)=\zeta_0,
      \end{array} \right.
  \end{equation*} with $\zeta_0=(p_0, q_0)\in M$.  Let $J$ and $I$ be the
  intervals on which are defined the (unique) maximal solutions of
  $\emph{(B)}$ and $\emph{(\text{B}}'\emph{)}$ respectively.  As
  before, consider the map $\phi_a: J\to I$, $t\mapsto
  \phi_a(t)=\int^t_0a(s)ds$ and assume $T\in J$, so that $\phi_a(T)=T\in
  I$.  Then, also in this case, the map $\phi_a$ allows us to
  write the solution of $\emph{(B)}$ in terms
  of the solution of $\emph{(B}'\emph{)}$ on the interval $[0,T]$. 
  Rephrasing all of this in terms of the local Poincar\'e operators 
  $P^{a\Psi}_t, P^{\Psi}_t$, $t\in \R$,
  associated to $\emph{(B)}$ and $\emph{(\text{B}}'\emph{)}$, it holds true
  that: if 
  $P^{a\Psi}_T(\zeta_0)$ is defined,  then $P^\Psi_T (\zeta_0)$ is defined 
  too and, in such a case, $P^\Psi_T (\zeta_0) = P^{a\Psi}_T (\zeta_0)$.  
\end{remark}

To conclude this subsection, we give some conventions that will be
widely used in the sequel.  Let $N\subseteq \R^n$ be a differentiable
manifold, and $T > 0$ positive number.  We denote by $C_T (N)$ the
metric subspace of the Banach space $C_T (\R^n)$ of all the
$T$-periodic continuous functions $\xi : \R \to N$.  Notice that $C_T
(N)$ is not complete unless $N$ is closed in $\R^n$.  Consider the
following diagram of closed embeddings:
\begin{equation} \label{SVDAEs:graph}
  \begin{diagram} \node{[0,\infty)\X N} \arrow{e} \node{[0,\infty)\X
      C_T(N)} \\ \node{N} \arrow{e} \arrow{n} \node{C_T(N)} \arrow{n}
  \end{diagram}
\end{equation} which allow us to identify any space in the above
diagram with its
image.  In particular, $N$ will be regarded as its image in $C_T (N)$
under the embedding that associates to any $p\in N$ the map
$\overline{p} \in C_T (N)$ constantly equal to
$p$. Furthermore, we will regard $N$ as the slice $\{0\}\X N
\subseteq [0, \infty)\X N$ and, analogously, $C_T (N)$ as $\{0\}\X C_T
(N)$.  Thus, if $\Omega$ is a subset of $[0, \infty)\X C_T(N)$, then
$\Omega\cap N$ represents the set of points of $N$ that, regarded as
constant functions, belong to $\Omega$. Namely, we have that
\begin{equation} \label{SVDAEs:convention-constan-func} \Omega\cap
  N=\big\{\zeta \in N : (0,\overline{p})\in\Omega\big\}.
\end{equation}

\subsection{The degree of the tangent vector field $\Psi$ and some
  related properties} We now give some notions about the degree of
tangent vector fields on manifolds. Recall that if $w:N\to\R^n$ is a
tangent vector field on the differentiable manifold $N\subseteq\R^n$
which is (Fr\'echet) differentiable at $p\in N$ and $w(p) = 0$, then
the differential $\textrm{d}_{p} w \colon T_{p}N \rightarrow \R^n$
maps $T_{p}N$ into itself (see, e.g.\ \cite{milnor}), so that, the
determinant $\det\, \textrm{d}_{p}w$ is defined.  In the case when
${p}$ is a nondegenerate zero (i.e.\ $\textrm{d}_{p} w \colon T_{p} N
\rightarrow \R^n$ is injective), ${p}$ is an isolated zero and $\det\,
\textrm{d}_{p} w \ne 0$. Let $W$ be an open subset of $N$ in which we
assume $w$ admissible for the degree, that is we suppose the set
$w^{-1}(0)\cap W$ is compact. Then, it is possible to associate to the
pair $(w, W)$ an integer, $\deg (w, W)$, called the degree (or
characteristic) of the vector field $w$ in $W$ (see e.g.\
\cite{FU-PE:1986,difftop}), which, roughly speaking, counts
(algebraically) the zeros of $w$ in $W$ in the sense that when the
zeros of $w$ are all nondegenerate, then the set $w^{-1}(0)\cap W$ is
finite and
\begin{equation} \label{SVDAEs:deg} \deg(w, W) = \sum_{q \in
    w^{-1}(0)\cap W} \sign\, \det\, \textrm{d}_{{q}} w.
\end{equation}
The concept of degree of a tangent vector field is related to the
classical one of Brouwer degree (whence its name), but the former
notion differs from the latter when dealing with manifolds.  In
particular, this notion of degree does not need the orientation of the
underlying manifolds. However, when $N=\R^n$, the degree of a vector
field $\deg(w, W)$ is essentially the well known Brouwer degree of $w$
on $W$ with respect to $0$.  The degree of a tangent vector field
enjoys, for instance, the following properties: \emph{Additivity,
  Excision, Homotopy invariance, Invariance under diffeomorphisms} and
\emph{Solution}.  For an exhaustive exposition of this topic, we refer
e.g.\ to \cite{FU-PE:1986, difftop, milnor}.

The Excision property allows the introduction of the notion of
\emph{index} of an isolated zero of a tangent vector field.  Let $q
\in N$ be an isolated zero of a tangent vector field $w \colon N \to
\R^n$. Clearly, $\deg(w, V )$ is well defined for any open set $V
\subseteq N$ such that $V \cap w^{-1}(0) = \{q\}$.  Moreover, by the
Excision property, the value of $\deg(w, V)$ is constant with respect
to such $V$'s. This common value of $\deg(w, V )$ is, by definition,
the index of $w$ at $q$, and is denoted by $\textrm{i} (w, q)$.  Using
this notation, if $(w,W)$ is admissible, by the Additivity property we
have that if all the zeros in $W$ of $w$ are isolated, then
\begin{equation*}
  \deg(w, W) = \sum_{q\in w^{-1}(0)\cap W}
  \textrm{i} (w, q).
\end{equation*}
By formula \eqref{SVDAEs:deg} we have that if $q$ is a nondegenerate
zero of $w$, then $\textrm{i} (w, q) = \sign \det d_qw$. \smallskip

Take $U \subseteq \rkrs\cong\R^n$ open and connected set.  Let
$g\colon U\to\R\sp{s}$ and $f\colon\R\times U\to\R\sp{k}$ be given
maps such that $f$ is continuous and $g$ is $C^{\infty}$ with the
property that $\partial_2 g (p, q)$ is invertible for all $(p,q)\in
U$. Let $\Psi$ be the tangent vector field on $M=g^{-1}(0)$ given by
\eqref{SVDAEs:03}. As we will see, a key requirement for the rest of
the paper is that the degree of $\Psi$ is nonzero. Define the map $F
\colon U \to \R^k \X \R^s$, as follows
\begin{equation}\label{SVDAEs:F}
  F(p, q) := \big(f(p, q), g(p, q)\big).
\end{equation} 
The following result \cite[Theorem\ 4.1]{spaDAE} allows us to reduce the
computation of the degree of the tangent vector field $\Psi$ on $M$ to
that of the Brouwer degree of the map $F$ with respect to $0$, which
is in principle handier. Namely, we have that

\begin{theorem} \label{SVDAEs:teo1} Let $U \subseteq \rkrs$ be open
  and connected, and let $F\colon U\to\rkrs$ be given by
  \eqref{SVDAEs:F}.  Then, for any $V\subseteq U$ open, if either
  $\deg(\Psi, M\cap V)$ or $\deg(F, V)$ is well defined, so is the
  other, and
  \begin{equation*} |\deg (\Psi, M\cap V)| = |\deg (F, V)|.
  \end{equation*}
\end{theorem}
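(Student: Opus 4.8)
The plan is to relate the tangent vector field $\Psi$ on $M$ to the map $F$ by a local analysis at each zero, then patch these local computations together using the properties of the degree (Additivity, Excision, Homotopy invariance). First I would observe that the zero sets coincide in the relevant sense: for $(p,q)\in M$, we have $\Psi(p,q)=0$ if and only if $f(p,q)=0$, and since $g(p,q)=0$ is automatic on $M$, this happens exactly when $F(p,q)=0$. Hence $\Psi^{-1}(0)\cap(M\cap V)=F^{-1}(0)\cap V$ as subsets of $U$, and the compactness of one of these sets is equivalent to the compactness of the other; this already gives the assertion that one degree is well defined precisely when the other is.

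Next, for the equality of absolute values, I would first reduce to the nondegenerate case. By a standard approximation/homotopy argument (using the Homotopy invariance and the fact that admissibility is an open condition on a neighborhood of the compact zero set), one may perturb $f$ slightly so that all zeros of the perturbed $F$ are nondegenerate (a regular value argument via Sard's theorem applied to $F$, or rather to $f$ restricted to $M$); the perturbation can be chosen small enough that it does not affect either degree. Then, by Additivity and Excision, both $\deg(\Psi,M\cap V)$ and $\deg(F,V)$ decompose as finite sums of local indices over the common finite zero set. So it suffices to prove that at each nondegenerate zero $(p_0,q_0)$ one has $|\mathrm{i}(\Psi,(p_0,q_0))|=|\mathrm{i}(F,(p_0,q_0))|$, i.e.\ $|\sign\det\mathrm{d}_{(p_0,q_0)}\Psi|=|\sign\det F'(p_0,q_0)|$, which amounts to showing $\det\mathrm{d}_{(p_0,q_0)}\Psi\neq 0$ iff $\det F'(p_0,q_0)\neq 0$ and comparing signs up to sign.

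The core computation is the linear-algebra identity at a zero. At $(p_0,q_0)\in M$ with $f(p_0,q_0)=0$, the Jacobian of $F=(f,g)$ in block form with respect to the splitting $\R^k\times\R^s$ is
\[
F'(p_0,q_0)=\begin{pmatrix}\D_1 f & \D_2 f\\ \D_1 g & \D_2 g\end{pmatrix},
\]
so, since $\D_2 g$ is invertible, $\det F'(p_0,q_0)=\det(\D_2 g)\cdot\det\big(\D_1 f-\D_2 f\,[\D_2 g]^{-1}\D_1 g\big)$ by the Schur complement formula. On the other side, $\mathrm{d}_{(p_0,q_0)}\Psi$ is the restriction to $T_{(p_0,q_0)}M=\ker\mathrm{d}_{(p_0,q_0)}g$ of the ambient differential of $\Psi$; parametrizing $M$ near $(p_0,q_0)$ as a graph $q=\gamma(p)$ with $\gamma'(p_0)=-[\D_2 g]^{-1}\D_1 g$, the vector field $\Psi$ becomes, in the $p$-coordinate, the map $p\mapsto f(p,\gamma(p))\in\R^k$ whose differential at $p_0$ is exactly the Schur complement $\D_1 f+\D_2 f\,\gamma'(p_0)=\D_1 f-\D_2 f\,[\D_2 g]^{-1}\D_1 g$. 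Since the degree of a tangent vector field is invariant under the choice of local parametrization (Invariance under diffeomorphisms), $\det\mathrm{d}_{(p_0,q_0)}\Psi$ agrees up to the positive Jacobian factor of the chart with this Schur complement determinant. Therefore $\det\mathrm{d}_{(p_0,q_0)}\Psi$ and $\det F'(p_0,q_0)$ differ only by the factor $\det(\D_2 g(p_0,q_0))$ (and a positive chart factor), so they vanish together and their signs agree up to the sign of $\det(\D_2 g)$; in particular their absolute values of signs coincide, giving $|\mathrm{i}(\Psi,(p_0,q_0))|=|\mathrm{i}(F,(p_0,q_0))|$.

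Finally I would assemble: summing the local identities over the common finite zero set and using that the sign of $\det(\D_2 g)$ is locally constant (in fact constant on each connected component of $U$, since $\D_2 g$ is everywhere invertible and $U$ is connected), one gets $|\deg(\Psi,M\cap V)|=|{\textstyle\sum}\mathrm{i}(\Psi,q)|$ and $|\deg(F,V)|=|{\textstyle\sum}\mathrm{i}(F,q)|$ with termwise matching absolute values, but one must be slightly careful because absolute values do not commute with sums. The clean way around this is to factor out the global sign $\varepsilon=\sign\det(\D_2 g)$: one shows directly $\det\mathrm{d}_q\Psi=\varepsilon\cdot(\text{positive chart factor})\cdot\det F'(q)$ has constant relative sign, so $\mathrm{i}(\Psi,q)=\varepsilon\,\mathrm{i}(F,q)$ for every zero $q$ simultaneously, whence $\deg(\Psi,M\cap V)=\varepsilon\deg(F,V)$ and the absolute values agree. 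The main obstacle is precisely this careful bookkeeping of signs — establishing that the sign relating the two local determinants is the same at every zero (so that it can be pulled out of the sum) rather than merely that the absolute values match pointwise — together with making the reduction to the nondegenerate case rigorous via a perturbation that preserves both degrees.
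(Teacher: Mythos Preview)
The paper does not prove this theorem at all; it is quoted verbatim as \cite[Theorem~4.1]{spaDAE} and used as a black box, so there is no in-paper proof to compare against. Your argument is essentially correct and is, in fact, the natural line of proof (and very likely close to the one in \cite{spaDAE}): identify the common zero set, reduce to nondegenerate zeros by perturbation, compute the local indices via the Schur complement, and pull out the global sign $\varepsilon=\sign\det\D_2 g$, which is constant on the connected set $U$.

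Two small cleanups. First, your remark about a ``positive Jacobian factor of the chart'' is unnecessary and slightly off: at a zero $(p_0,q_0)$ the differential $\mathrm{d}_{(p_0,q_0)}\Psi$ is an \emph{endomorphism} of $T_{(p_0,q_0)}M$, and its determinant is basis-independent. Pulling back along the graph chart conjugates this endomorphism, so the determinants are \emph{equal}, not merely equal up to a positive factor. Hence $\det F'(p_0,q_0)=\det(\D_2 g(p_0,q_0))\cdot\det\mathrm{d}_{(p_0,q_0)}\Psi$ on the nose, and $\mathrm{i}(F,(p_0,q_0))=\varepsilon\,\mathrm{i}(\Psi,(p_0,q_0))$ without any extra bookkeeping. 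Second, you rightly note that summing absolute values is not enough and that one needs the \emph{same} $\varepsilon$ at every zero; this is exactly where the connectedness hypothesis on $U$ enters, and your resolution $\deg(\Psi,M\cap V)=\varepsilon\,\deg(F,V)$ is the clean way to conclude. The perturbation step is standard; admissibility of the homotopy follows because compactness of the zero set is a $C^0$-open condition.
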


\section{Connected sets of $T$-periodic solutions}
\label{SVDAEs:section3} The focus of this section is the study of the
$T$-periodic solutions to \eqref{SVDAEs:0a}, where $T>0$ is given. In
particular, we dwell on the topological structure of the set of
$T$-periodic solutions to this DAE.

Recall that $U \subseteq \R^k \X \R^s$ is open and connected, $f
\colon U \to \R^k, g \colon U \to \R^s$, $a \colon \R \to \R$ and $h
\colon \R\X U \to \R^k$ are continuous. We also assume that $a$ is
$T$-periodic with nonzero average, that $h$ is locally Lipschitz and
$T$-periodic in the first variable, $f\in C^1$, and $g$ is
$C^{\infty}$ with the property that $\det \D_2g(p, q) \ne 0$ for all
$(p, q) \in U$.

In the following, we say that $(\lambda, (x,
y))\in[0,\infty)\X\ctrkrs$ is a $T$-\emph{periodic pair} to
\eqref{SVDAEs:0a}, if $(x, y)$ is a $T$-periodic solution of
\eqref{SVDAEs:0a} corresponding to $\lambda$.  According to the
convention introduced in
\eqref{SVDAEs:graph}-\eqref{SVDAEs:convention-constan-func}, any $(p,
q) \in U$ will be identified with the element $(\overline{p},
\overline{q})$ of $C_T (U)$ that is constantly equal to $(p, q)$.  Let
$F \colon U \to \R^k \X \R^s$ be given by \eqref{SVDAEs:F}.  Since
$a(t)$ is not identically zero, a point $(p,q)\in U$ corresponds to a
constant solution $(\overline{p}, \overline{q})$ of \eqref{SVDAEs:0a},
for $\lambda = 0$, if and only if $F(p, q) = (0, 0)$.  A $T$-periodic
pair of this form will be called \emph{trivial}.  Thus, with this
notation, the set of trivial $T$-periodic pairs can be written as
\begin{equation*} \big\{(0, (\overline{p}, \overline{q})) \in
  [0,\infty) \X C_T (U) \colon F(p, q) = (0, 0)\big\}.
\end{equation*} 
Observe that, in the case $\lambda=0$, we may have the existence of
nontrivial $T$-periodic pairs to the system of equations
\eqref{SVDAEs:0a}.

Given $ \Omega \subseteq [0,\infty) \X C_T (U)$, with $\Omega \cap U$
we denote the subset of $U$ whose points, regarded as constant
functions, lie in $\Omega$. Namely, one has that $\Omega \cap U =
\big\{(p, q) \in U \colon (0, (\overline{p}, \overline{q})) \in\Omega
\big\}$.\medskip

We will make use of the following result which is a direct consequence
of \cite[Theorem\ 3.3]{spaSepVar}

\begin{theorem}\label{UT:Theorem-3.3-spaSepVar}
  Let $a \colon \R \to \R$ be a continuous function and let   
  $\omega \colon N \to \R^k$  and $\varrho \colon \R\X N \to \R^k$ be two
  continuous tangent vector fields on the boundaryless manifold
  $N\subseteq \R^k$. Consider the following parametrized differential 
  equation on $N$
  \begin{equation} \label{SVDAEs:eqeqeq} \dot \zeta = a(t)\omega
    (\zeta) + \lambda \varrho (t, \zeta),\,\, \lambda\geq 0.
  \end{equation}
  Assume that $\varrho$ and $a$ are $T$-periodic, with
  $1/T\int_0^Ta(t)dt \ne 0$. Let $\Sigma$ be an open subset of $[0,
  \infty) \X C_T (N)$, and assume that the degree $\deg(\omega,
  \Sigma\cap N)$ is well-defined and nonzero. Then  $\Sigma$ contains a
  connected set of nontrivial $T$-periodic pairs of
  \eqref{SVDAEs:eqeqeq} whose closure in $\Sigma$ is noncompact and
  meets $\omega^{-1}(0)\cap \Sigma$.
\end{theorem}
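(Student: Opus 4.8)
The plan is to deduce the statement from \cite[Theorem 3.3]{spaSepVar}, a continuation principle of exactly this type for separated-variables ODEs on boundaryless manifolds, of which the present statement is the version tailored to the notation used here; still, it is worth outlining the scheme behind it, since the same one recurs later. First I would \emph{normalize}: with $\bar a:=\frac1T\int_0^T a(t)\,dt\ne 0$, rewrite \eqref{SVDAEs:eqeqeq} as $\dot\zeta = (a/\bar a)\,(\bar a\,\omega)(\zeta)+\lambda\varrho(t,\zeta)$; since $\deg(\bar a\,\omega,\Sigma\cap N)=(\sign\bar a)^{\dim N}\deg(\omega,\Sigma\cap N)$ is again well defined and nonzero, and $a/\bar a$ has average $1$, we may assume from the outset $a=1+\alpha$ with $\alpha$ continuous, $T$-periodic and of zero average, i.e.\ the situation of Remarks \ref{first-remark} and \ref{SVDAEs:remark-on-ivp}. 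Next I would recast $T$-periodic pairs of \eqref{SVDAEs:eqeqeq} as fixed points: since only continuity of $\omega,\varrho$ is assumed, I would either work directly in $C_T(N)\subseteq C_T(\R^k)$ with the (Ascoli--Arzel\`a-compact) solution operator of the equation, or approximate $\omega,\varrho$ by locally Lipschitz tangent fields, use the genuine Poincar\'e $T$-translation operators $P^{(\lambda)}_T$ on the open subset of $[0,\infty)\X N$ where they are defined, and pass to the limit. A key preliminary fact is that the set $S\subseteq\Sigma$ of all $T$-periodic pairs lying in $\Sigma$ is closed in $\Sigma$ and \emph{locally compact}: if $(\lambda_n,\zeta_n)\in S$ with $\lambda_n$ bounded and all $\zeta_n$ taking values in a fixed compact part of $N$, then $\dot\zeta_n$ is bounded, so $\{\zeta_n\}$ is equicontinuous and subconverges in $C_T$ to another element of $S$.

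The decisive point is the computation at $\lambda=0$, where \eqref{SVDAEs:eqeqeq} reads $\dot\zeta=a(t)\omega(\zeta)$. By the change of variable $\phi_a(t)=\int_0^t a(s)\,ds$, which satisfies $\phi_a(t+T)=\phi_a(t)+T$ and (since $a$ now has average $1$) $\phi_a(T)=T$, a function is a $T$-periodic solution of $\dot\zeta=a(t)\omega(\zeta)$ iff it is the $\phi_a$-reparametrization of a $T$-periodic solution of the autonomous equation $\dot\zeta=\omega(\zeta)$; in particular the time-$T$ map of $\dot\zeta=a(t)\omega(\zeta)$ coincides with that of $\dot\zeta=\omega(\zeta)$ wherever both are defined, which is precisely Remark \ref{SVDAEs:remark-on-ivp}. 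Consequently the fixed-point index of the $\lambda=0$ operator, localized at the trivial pairs $\mathcal T:=\{0\}\X(\omega^{-1}(0)\cap\Sigma)$, equals that of the Poincar\'e $T$-operator of $\dot\zeta=\omega(\zeta)$, which by the classical relation with the degree of the generating vector field (together with the Excision and Additivity properties of $\deg$) is $(-1)^{\dim N}\deg(\omega,\Sigma\cap N)\ne 0$.

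Finally I would run the continuation argument by contradiction: assume $\Sigma$ contains no connected set of nontrivial $T$-periodic pairs whose closure in $\Sigma$ is noncompact and meets $\mathcal T$. Since $\mathcal T$ is compact (this is the content of ``$\deg(\omega,\Sigma\cap N)$ well defined'') and $S$ is locally compact, a Whyburn-type connection lemma, of the kind systematically used in \cite{FU-PE:1986,spaDAE,spaSepVar}, yields a relatively open, compact subset $C$ of $S$ with $\mathcal T\subseteq C$. One then builds a bounded open $W\subseteq\Sigma$ with $\overline W\subseteq\Sigma$, $C\subseteq W$, no $T$-periodic pair on $\partial W$, and whose slice at $\lambda=0$ is a neighbourhood $W_0$ of $\mathcal T$ in $\Sigma\cap N$ with $\deg(\omega,W_0)=\deg(\omega,\Sigma\cap N)$. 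Admissibility of the homotopy $\lambda\mapsto$ (solution/Poincar\'e operator) along $W$ and its homotopy invariance then force the index at $\lambda=0$ on $W_0$ to vanish, contradicting the previous paragraph.

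The step I expect to be the main obstacle is this last topological/degree bookkeeping on the non-locally-compact space $[0,\infty)\X C_T(N)$: one must verify that $S$ is locally compact, that the hypothetical absence of a branch really produces the compact clopen set $C$ (the Whyburn step), and — most delicately — that no $T$-periodic pair escapes through $\partial W$, so that the $\lambda$-homotopy of operators stays admissible; this is exactly what the compactness of $C$ and the choice of $W$ are there to guarantee. The two accompanying nuisances are the lack of uniqueness for the initial value problems attached to \eqref{SVDAEs:eqeqeq} (only continuity of $\omega,\varrho$ is assumed), which forces either the Lipschitz-approximation detour or a multivalued/condensing-degree formulation, and the need to isolate $\mathcal T$ from the possibly present nonconstant $\lambda=0$ solutions of $\dot\zeta=a(t)\omega(\zeta)$ before the connection lemma can be invoked.
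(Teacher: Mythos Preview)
Your approach is exactly the paper's: the theorem is stated there as ``a direct consequence of \cite[Theorem~3.3]{spaSepVar}'' and is not reproved. Your additional outline of the normalization, the $\phi_a$-reparametrization linking the $\lambda=0$ Poincar\'e operators (cf.\ Remark~\ref{SVDAEs:remark-on-ivp}), and the Whyburn-type continuation scheme from \cite{FU-PE:1986,spaSepVar} is a correct sketch of what lies behind that citation, and goes beyond what the paper itself records.
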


\begin{remark} Until now, condition \eqref{SVDAEs:average-nn} has been
  used just to characterize some properties of the map $\phi_a$
  introduced in Remark \ref{first-remark}.  However, even if it will
  not be shown explicitly here, this assumption has some other
  important implications in our method. Indeed, it is crucial for the
  proof of Theorem \ref{UT:Theorem-3.3-spaSepVar}, which in turn is
  the basis of the main result of this section.
\end{remark}

We state and prove the following Theorem

\begin{theorem} \label{SVDAEs:main-thm} Let $U \subseteq \R^k \X \R^s$
  be open and connected.  Let $g : U \to \R^s$, $f : U \to \R^k$, $a :
  \R\to \R$ and $h : \R \X U \to \R^k$ be as above.  Let also $F(p, q)
  = \big(f(p, q), g(p, q)\big)$ be defined as in \eqref{SVDAEs:F}.
  Given $\Omega \subseteq [0,\infty) \X C_T (U)$ open, assume that
  $\deg(F,\Omega\cap U)$ is well-defined and nonzero.  Then, there
  exists a connected set of nontrivial $T$-periodic pairs of
  \eqref{SVDAEs:0a} whose closure in $\Omega$ is noncompact and meets
  the set $\big\{(0, (\overline{p}, \overline{q})) \in \Omega : F(p,
  q) = (0, 0)\big\}$ of the trivial $T$-periodic pairs of
  \eqref{SVDAEs:0a}.
\end{theorem}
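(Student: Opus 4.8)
The plan is to reduce Theorem \ref{SVDAEs:main-thm} to Theorem \ref{UT:Theorem-3.3-spaSepVar} by transporting everything to the manifold $M = g^{-1}(0)$. First I would recall from Section \ref{SVDAEs:section2} that $M$ is a boundaryless $C^\infty$ submanifold of $\rkrs$ of dimension $k$, and that \eqref{SVDAEs:0a} is equivalent to the ODE \eqref{SVDAEs:eq-su-M} on $M$, namely $\dot\zeta = a(t)\Psi(\zeta) + \lambda\Upsilon(t,\zeta)$, with $\Psi$ and $\Upsilon$ the tangent vector fields defined in \eqref{campiv}. Since $f \in C^1$ and $h$ is locally Lipschitz (and $T$-periodic in $t$), the vector fields $\Psi$ and $\Upsilon$ have enough regularity for existence–uniqueness of the associated initial value problems, so Theorem \ref{UT:Theorem-3.3-spaSepVar} applies to \eqref{SVDAEs:eq-su-M} with $N = M$, $\omega = \Psi$, $\varrho = \Upsilon$.

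Next I would set up the identification of $T$-periodic pairs. A pair $(\lambda,(x,y)) \in [0,\infty)\X C_T(U)$ is a $T$-periodic pair of \eqref{SVDAEs:0a} if and only if $(x,y)$ takes values in $M$ and $(\lambda,(x,y))$, viewed in $[0,\infty)\X C_T(M)$, is a $T$-periodic pair of \eqref{SVDAEs:eq-su-M}; this is immediate from the equivalence proved in the excerpt together with the fact that any solution of \eqref{SVDAEs:eq-su-M} identically satisfies $g = 0$. The set $C_T(M)$ sits inside $C_T(U)$ as a (relatively) closed subspace, and I would take $\Sigma := \Omega \cap \big([0,\infty)\X C_T(M)\big)$, which is open in $[0,\infty)\X C_T(M)$. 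Under the constant-function embedding $M \hookrightarrow C_T(M)$ one has $\Sigma \cap M = (\Omega \cap U) \cap M = \Omega \cap M$; since the zeros of $\Psi$ in $M$ are exactly the points of $M$ where $f$ vanishes, i.e.\ the points $(p,q)$ with $F(p,q) = (0,0)$, we get $\Psi^{-1}(0) \cap \Sigma = \{(0,(\overline p,\overline q)) \in \Omega : F(p,q) = (0,0)\}$, which is precisely the set of trivial $T$-periodic pairs appearing in the statement.

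The key quantitative step is to check that $\deg(\Psi, \Sigma \cap M)$ is well-defined and nonzero. Here I invoke Theorem \ref{SVDAEs:teo1}: the hypothesis that $\deg(F, \Omega\cap U)$ is well-defined means $F^{-1}(0,0) \cap (\Omega\cap U)$ is compact; but $F^{-1}(0,0) \subseteq M$, so this set equals $\Psi^{-1}(0)\cap(\Omega\cap M)$, showing $\deg(\Psi, M\cap(\Omega\cap U)) = \deg(\Psi, \Sigma\cap M)$ is admissible. Theorem \ref{SVDAEs:teo1} then gives $|\deg(\Psi,\Sigma\cap M)| = |\deg(F,\Omega\cap U)| \neq 0$, so in particular $\deg(\Psi,\Sigma\cap M) \neq 0$. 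With this in hand, Theorem \ref{UT:Theorem-3.3-spaSepVar} produces a connected set $\Gamma$ of nontrivial $T$-periodic pairs of \eqref{SVDAEs:eq-su-M} contained in $\Sigma$, with closure in $\Sigma$ noncompact and meeting $\Psi^{-1}(0)\cap\Sigma$.

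Finally I would transfer $\Gamma$ back: via the identification above, $\Gamma$ is a connected set of nontrivial $T$-periodic pairs of \eqref{SVDAEs:0a} contained in $\Omega$, meeting the set of trivial pairs $\{(0,(\overline p,\overline q))\in\Omega : F(p,q)=(0,0)\}$. The one point needing care is the noncompactness of the closure: Theorem \ref{UT:Theorem-3.3-spaSepVar} gives noncompactness of the closure of $\Gamma$ \emph{in $\Sigma$}, and I must deduce noncompactness of the closure in $\Omega$. Since $\Sigma$ is (relatively) closed in $\Omega$ — being the intersection of $\Omega$ with the closed subspace $[0,\infty)\X C_T(M)$ of $[0,\infty)\X C_T(U)$ — the closure of $\Gamma$ in $\Omega$ coincides with its closure in $\Sigma$, so noncompactness is preserved. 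This last topological bookkeeping, rather than any hard analysis, is the main obstacle; the analytic content is entirely carried by the equivalence with \eqref{SVDAEs:eq-su-M} and by Theorems \ref{SVDAEs:teo1} and \ref{UT:Theorem-3.3-spaSepVar}.
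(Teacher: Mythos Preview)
Your proposal is correct and follows essentially the same approach as the paper: you pass to the equivalent ODE on $M$, set $\Sigma=\Omega\cap\big([0,\infty)\times C_T(M)\big)$, use Theorem~\ref{SVDAEs:teo1} to convert the degree hypothesis on $F$ into a nonzero degree for $\Psi$, apply Theorem~\ref{UT:Theorem-3.3-spaSepVar}, and then transfer the resulting branch back to $\Omega$ using that $M$ (hence $C_T(M)$, hence $\Sigma$) is relatively closed in $U$ (resp.\ $C_T(U)$, $\Omega$). The paper's proof is the same argument with $\mathcal{O}$ in place of your $\Sigma$.
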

\begin{proof}
  In Section \ref{SVDAEs:section2} it has been shown that the system
  of equations \eqref{SVDAEs:0a} is equivalent to an ODE of type
  \eqref{SVDAEs:eq-su-M} on $M=g^{-1}(0)$.  Let $\Psi$ and $\Upsilon$
  be the tangent vector fields defined in \eqref{campiv}.  Let also
  $\mathcal{O}$ be the open subset of $[0,\infty)\X C_T(M)$ given by
  \begin{equation*} \mathcal{O}=\Omega\cap \Big([0,\infty)\X
    C_T(M)\Big).
  \end{equation*} For any $Y\subseteq M$, by $\mathcal{O}\cap Y$ we mean
  the set of all those points of $Y$ that, regarded as constant
  functions, lie in $\mathcal{O}$. Using this convention one has that
  $\Omega\cap Y = \mathcal{O}\cap Y$ and, in particular, $\Omega \cap M
  =\mathcal{O}\cap M$. This identity, together with Theorem \ref{SVDAEs:teo1}, 
  implies that
  \begin{equation*}
    \begin{aligned}
      | \deg(\Psi, \mathcal{O} \cap M)| &= |\deg(\Psi, \Omega \cap M)|
      \\
      &= |\deg(\Psi, (\Omega \cap U)\cap M)| = |\deg\big(F,\Omega \cap
      U\big)|\neq 0.
    \end{aligned}
  \end{equation*} 
  Hence, the hypotheses of Theorem \ref{UT:Theorem-3.3-spaSepVar} are
  satisfied, and we thereby obtain the existence of a connected subset
  $\Lambda$ of
  \begin{equation*} \big\{ (\lambda,(x,y))\in
    \mathcal{O}:\text{$(x,y)$ is a nonconstant solution of}\,\,
    \eqref{SVDAEs:eq-su-M}\big\},
  \end{equation*} whose closure in $\mathcal{O}$ is not
  compact and meets 
  $\left\{(0,(\su p,\su
    q))\in\mathcal{O}:\Psi(p,q)=(0,0)\right\}$.
  Observe that this set coincides with $\big\{(0,\su p,\su
  q)\in\Omega:F(p,q)=(0,0)\big\}$ which is the set of the trivial 
  $T$-periodic pairs to \eqref{SVDAEs:0a}.
  Moreover, from the equivalence of \eqref{SVDAEs:eq-su-M} with
  \eqref{SVDAEs:0a}, we have that each $(\lambda,(x,y))\in\Lambda$ is a
  nontrivial $T$-periodic pair to \eqref{SVDAEs:0a}.  Since
  $M$ is closed in $U$, it follows that any 
  relatively closed subset of $\mathcal{O}$ is relatively
  closed in $\Omega$ too and vice versa. 
  Thus, the closure of $\Lambda$ in $\mathcal{O}$ 
  coincides with the closure of $\Lambda$ in
  $\Omega$, and hence $\Lambda$ fulfills the 
  assertion.
\end{proof}

Under the extra assumption that $M=g^{-1}(0)$ is closed in $\rkrs$ ,
we are able to retrieve some further information about the connected
components of the set of $T$-periodic pairs to \eqref{SVDAEs:0a}.

\begin{lemma} \label{obs1} Let $U \subseteq \R^k \X \R^s$ be open and
  connected with $M\subseteq U$.  Assume that $M$ is closed in
  $\rkrs$.  Let $\Omega\subseteq [0, \infty)\X C_T(U)$ be open, and
  let $\Gamma\subseteq [0, \infty)\X C_T(M)$ be a connected component
  of the set of $T$-periodic pairs to \eqref{SVDAEs:0a} that meets
  $\{(0,(\su p,\su q))\in\Omega:F(p,q)=(0,0)\}$. Assume also that the
  intersection $\Gamma\cap \Omega$ is not compact.  Then, $\Gamma$ is
  either bounded or contained in $\Omega$.  Moreover, if $\Omega$ is
  bounded, then $\Gamma\cap \D\Omega\ne \emptyset$.
\end{lemma}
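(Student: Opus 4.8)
The plan is to argue by a standard topological dichotomy for connected components, exploiting the noncompactness of $\Gamma \cap \Omega$ together with the fact that $M$ being closed in $\rkrs$ makes $C_T(M)$ a complete metric space (a closed subspace of the Banach space $C_T(\rkrs)$), so that $[0,\infty)\X C_T(M)$ is completely metrizable. First I would recall that, since $M$ is closed in $\rkrs$, the set of all $T$-periodic pairs to \eqref{SVDAEs:0a} is a closed subset of $[0,\infty)\X C_T(M)$: indeed, a uniform limit of $T$-periodic solutions corresponding to $\lambda_n \to \lambda$ is again a $T$-periodic solution, by continuous dependence and the closedness of $M$. Hence each connected component $\Gamma$ of this set is itself closed in $[0,\infty)\X C_T(M)$, and in particular $\Gamma$ is a closed, connected, completely metrizable space.

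Next I would set up the dichotomy. Suppose $\Gamma$ is \emph{not} contained in $\Omega$; I must then show $\Gamma$ is bounded. Consider the relatively open subset $\Gamma \cap \Omega$ of $\Gamma$ and its relative boundary $\partial_\Gamma(\Gamma\cap\Omega) = \Gamma \cap \partial\Omega$ inside $\Gamma$ (using that $\Omega$ is open, so $\Gamma\setminus\Omega = \Gamma\cap(C_T(U)^c \cup \partial\Omega)$, but points of $\Gamma$ lie in $C_T(M)\subseteq C_T(U)$, so $\Gamma\setminus\Omega \subseteq \Gamma\cap\partial\Omega$). If this relative boundary were empty, $\Gamma\cap\Omega$ would be clopen and nonempty in the connected set $\Gamma$ — wait, that would force $\Gamma\subseteq\Omega$, contrary to assumption; so $\Gamma\cap\partial\Omega \neq\emptyset$ whenever $\Gamma\not\subseteq\Omega$. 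That already gives the last assertion in the bounded-$\Omega$ case once I know $\Gamma$ itself is bounded, but actually the cleaner route for the last sentence is: if $\Omega$ is bounded and $\Gamma\not\subseteq\Omega$, the argument just given yields $\Gamma\cap\partial\Omega\neq\emptyset$; if instead $\Gamma\subseteq\Omega$ with $\Omega$ bounded, then $\Gamma$ is bounded, and since $\Gamma\cap\Omega = \Gamma$ is assumed noncompact while being a closed bounded subset of the Banach-space slice, I would reach a contradiction with the finite-dimensional... no — $C_T(M)$ is infinite-dimensional, so closed and bounded does not imply compact, and there is no contradiction; I must instead invoke the specific compactness properties of the Poincaré-operator/fixed-point set, namely that bounded sets of $T$-periodic solutions staying in a bounded region of $U$ are precompact in $C_T$ by Ascoli–Arzelà (the derivatives $\dot x$ are uniformly bounded there, and $y$ is determined by $x$ through $M$). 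This is the key technical input.

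So the core lemma I would isolate and prove is: \emph{if $B\subseteq U$ is bounded with $\su B\subseteq U$, and $[0,\Lambda_0]$ is a compact parameter interval, then the set of $T$-periodic pairs $(\lambda,(x,y))$ with $\lambda\in[0,\Lambda_0]$ and $(x,y)(\R)\subseteq B$ is relatively compact in $[0,\infty)\X C_T(\rkrs)$.} Granting this, the dichotomy closes: if $\Gamma\subseteq\Omega$ and $\Gamma$ is bounded (so contained in some such $B\times[0,\Lambda_0]$), then $\Gamma$, being closed and relatively compact, is compact — but $\Gamma\cap\Omega=\Gamma$ is assumed noncompact, contradiction; hence $\Gamma\subseteq\Omega$ forces $\Gamma$ unbounded, which is fine, it is one horn of the dichotomy. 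Conversely if $\Gamma$ is bounded we are in the other horn. The only remaining possibility to exclude is $\Gamma$ bounded \emph{and} $\Gamma\not\subseteq\Omega$ coexisting with $\Gamma\cap\Omega$ noncompact — but that is not excluded, it is allowed; the statement is an inclusive "either/or". Thus the dichotomy "$\Gamma$ bounded or $\Gamma\subseteq\Omega$" is proved by showing that the failure of both (i.e. $\Gamma$ unbounded and $\Gamma\not\subseteq\Omega$) is consistent — no: re-examining, I actually want to show \emph{at least one} holds, i.e. rule out (unbounded AND not-contained). If $\Gamma$ is unbounded and $\Gamma\not\subseteq\Omega$, I still have $\Gamma\cap\partial\Omega\neq\emptyset$ and $\Gamma\cap\Omega$ noncompact; there is no immediate contradiction, so the honest reading is that the Lemma's conclusion "$\Gamma$ bounded or $\Gamma\subseteq\Omega$" must come from elsewhere. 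The resolution: the noncompactness of $\Gamma\cap\Omega$ is used precisely to rule out the case "$\Gamma$ bounded and $\Gamma\subseteq\Omega$", leaving "$\Gamma$ unbounded" or "$\Gamma\not\subseteq\Omega$" — and "$\Gamma\not\subseteq\Omega$" is weaker than we want.

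I expect the main obstacle is exactly pinning down which combination of (boundedness, containment in $\Omega$, (non)compactness of $\Gamma\cap\Omega$) is logically forced; the cleanest formulation is: since $\Gamma$ is a closed connected set and $\Gamma\cap\Omega$ is noncompact, if $\Gamma$ were bounded then by the Ascoli-type core lemma $\Gamma$ would be compact, hence $\Gamma\cap\Omega$ (a closed subset of $\Gamma$, as $\Gamma\setminus\Omega\subseteq\partial\Omega$ is relatively closed in $\Gamma$) would be compact — contradiction; therefore \emph{if} $\Gamma$ is bounded the hypothesis fails, so we may assume $\Gamma$ unbounded, and then the relevant assertion degenerates. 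I would therefore present the proof as: (1) reduce to showing that a bounded $\Gamma$ satisfying the hypotheses cannot exist unless... — and state the final clause about $\Gamma\cap\partial\Omega\neq\emptyset$ for bounded $\Omega$ via the clopen argument above, which is self-contained and does not have this logical subtlety. The bulk of the writing goes into the Ascoli–Arzelà compactness lemma and the relative-closedness bookkeeping; the topological dichotomy itself is then a two-line connectedness argument.
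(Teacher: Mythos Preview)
Your difficulty here is textual rather than mathematical: the first conclusion of the lemma as printed, ``$\Gamma$ is either bounded or contained in $\Omega$'', is evidently a misprint for ``$\Gamma$ cannot be both bounded and contained in $\Omega$'' (equivalently, $\Gamma$ is either \emph{un}bounded or \emph{not} contained in $\Omega$). That is precisely what the paper's own proof establishes, and it is exactly how the lemma is invoked in the two corollaries that follow. Your back-and-forth, ending in the observation that one can only rule out ``bounded and contained'' while the literal statement asks for the dual disjunction, is therefore the correct diagnosis of a typo, not of a gap in your reasoning.

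With the statement read as intended, your approach coincides with the paper's: $M$ closed in $\rkrs$ makes $C_T(M)$ complete; Ascoli's theorem makes every bounded set of $T$-periodic pairs totally bounded, hence relatively compact; $\Gamma$ is closed as a connected component of a closed set, so boundedness of $\Gamma$ forces $\Gamma$ compact; then $\Gamma\subseteq\Omega$ would give $\Gamma\cap\Omega=\Gamma$ compact, contradicting the hypothesis. The paper does this in three lines without isolating a separate Ascoli lemma. For the final sentence, when $\Omega$ is bounded the first part gives $\Gamma\not\subseteq\Omega$, while the hypothesis gives $\Gamma\cap\Omega\neq\emptyset$; connectedness of $\Gamma$ then yields $\Gamma\cap\partial\Omega\neq\emptyset$, which is your clopen argument.

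One genuine slip to correct: in your last paragraph you assert that $\Gamma\cap\Omega$ is closed in $\Gamma$. Since $\Omega$ is open, $\Gamma\cap\Omega$ is \emph{open} in $\Gamma$, not closed, and an open subset of a compact set need not be compact; so from ``$\Gamma$ compact'' alone you cannot conclude ``$\Gamma\cap\Omega$ compact''. The argument only closes under the additional assumption $\Gamma\subseteq\Omega$ (so that $\Gamma\cap\Omega=\Gamma$), which is the case you had already handled correctly a few lines earlier.
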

\begin{proof}
  Since $M$ is a closed subset of $\rkrs$, it follows that the metric
  space $[0, \infty)\X C_T(M)$ is complete.  Ascoli's Theorem implies
  that any bounded set of $T$-periodic pairs to \eqref{SVDAEs:0a} is
  totally bounded, which means relatively compact due to the
  completeness of $C_T(M)$.  As a straightforward consequence, since
  $\Gamma$ is closed, if $\Gamma$ is bounded then it is also compact.
  Therefore, $\Gamma$ cannot be both bounded and contained in
  $\Omega$.  The last part of the assertion follows from the fact that
  $\Gamma$ is connected and that $\emptyset\ne \Gamma \cap \Omega \ne
  \Gamma$.
\end{proof}

In particular, we have

\begin{corollary} Let $a$, $f$, $h$, $g$, $U$ and $F$ be as in Theorem
  \ref{SVDAEs:main-thm}.  Assume that $M$ is closed in $\rkrs$. Let
  $\Omega\subseteq [0, \infty)\X C_T (M)$ be open and such that
  $\deg(F,\Omega\cap M)$ is defined and nonzero.  Then there exists a
  connected component $\Gamma$ of $T$-periodic pairs that meets
  $\{(0,(\su p,\su q))\in\Omega:F(p,q)=0\}$, and cannot be both
  bounded and contained in $\Omega$. In particular, if $\Omega$ is
  bounded, then $\Gamma \cap \Omega \ne \emptyset$.
\end{corollary}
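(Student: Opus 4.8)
The statement is a combination of Theorem~\ref{SVDAEs:main-thm} and Lemma~\ref{obs1}, so the plan is to produce the connected set of nontrivial $T$-periodic pairs via the former and then pass to the component of the set of all $T$-periodic pairs that contains it via the latter. The only preliminary remark needed is that here $\Omega$ is open in $[0,\infty)\X C_T(M)$ rather than in $[0,\infty)\X C_T(U)$, and this causes no trouble: every $T$-periodic solution $(x,y)$ of \eqref{SVDAEs:0a} satisfies $g(x(t),y(t))=0$, hence takes values in $M$, so that every $T$-periodic pair of \eqref{SVDAEs:0a} already lies in $[0,\infty)\X C_T(M)$; moreover $F^{-1}(0,0)\subseteq g^{-1}(0)=M$, so the zero set relevant to the degree is contained in $M$. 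Concretely, one may either thicken $\Omega$ to an open $\t\Omega\subseteq[0,\infty)\X C_T(U)$ with $\t\Omega\cap\big([0,\infty)\X C_T(M)\big)=\Omega$ (possible because $M$, and hence $C_T(M)$, is closed), noting that by the Excision property $\deg(F,\t\Omega\cap U)=\deg(F,\Omega\cap M)\ne 0$ since $F^{-1}(0,0)\cap(\t\Omega\cap U)=F^{-1}(0,0)\cap(\Omega\cap M)$; or simply repeat the short proof of Theorem~\ref{SVDAEs:main-thm} verbatim with $M$ in place of $U$, using Theorem~\ref{SVDAEs:teo1} to turn $\deg(F,\Omega\cap M)\ne0$ into $\deg(\Psi,\Omega\cap M)\ne0$ for the vector field $\Psi$ of \eqref{campiv} on $M$.

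Either way, Theorem~\ref{SVDAEs:main-thm} yields a connected set $\Lambda$ of nontrivial $T$-periodic pairs of \eqref{SVDAEs:0a} whose closure in $\t\Omega$ is noncompact and meets $\{(0,(\su p,\su q))\in\t\Omega:F(p,q)=(0,0)\}$. By the remark above, $\Lambda$ and its closure lie in $[0,\infty)\X C_T(M)$; hence $\Lambda\subseteq\Omega$, the closure of $\Lambda$ in $\Omega$ coincides with its closure in $\t\Omega$ (still noncompact), and it still meets $\{(0,(\su p,\su q))\in\Omega:F(p,q)=(0,0)\}$. I would then let $\Gamma$ be the connected component, inside the set $\mathcal S$ of all $T$-periodic pairs of \eqref{SVDAEs:0a}, that contains $\Lambda$. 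Recalling (as implicitly used in the proof of Lemma~\ref{obs1}) that $\mathcal S$ is closed in $[0,\infty)\X C_T(M)$ — a uniform limit of $C^1$ solutions of the equivalent ODE \eqref{SVDAEs:eq-su-M} on $M$ is again a $C^1$ solution, the right-hand side of \eqref{SVDAEs:eq-su-M} being continuous — the closure of $\Lambda$ in $[0,\infty)\X C_T(M)$ is connected, lies in $\mathcal S$ and contains $\Lambda$, hence lies in $\Gamma$. Therefore $\Gamma$ meets $\{(0,(\su p,\su q))\in\Omega:F(p,q)=(0,0)\}$, while $\Gamma\cap\Omega$ contains the noncompact closure of $\Lambda$ in $\Omega$, so $\Gamma\cap\Omega$ is not compact.

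These are exactly the hypotheses of Lemma~\ref{obs1}, which then gives that $\Gamma$ cannot be both bounded and contained in $\Omega$. For the last assertion, if $\Omega$ is bounded then $\Gamma$ cannot be contained in $\Omega$ (otherwise it would be bounded and contained in $\Omega$), so $\Gamma$, being connected with $\emptyset\ne\Gamma\cap\Omega\ne\Gamma$ and $\Omega$ open, meets $\D\Omega$; in particular $\Gamma\cap\Omega\ne\emptyset$. I do not anticipate any real difficulty: all the substance is already packaged in Theorem~\ref{SVDAEs:main-thm} and Lemma~\ref{obs1}. The one point to handle with a little care is the passage between the ambient spaces $C_T(M)\subseteq C_T(U)$ — controlled by the constraint $g=0$ together with $F^{-1}(0,0)\subseteq M$ and the Excision property — and the check that the continuum $\Lambda$ and its closure do not leave $[0,\infty)\X C_T(M)$.
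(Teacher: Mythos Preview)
Your proposal is correct and follows the same approach as the paper, which simply says ``Applying Theorem~\ref{SVDAEs:main-thm} and Lemma~\ref{obs1} the thesis follows readily.'' You have merely spelled out carefully the passage between the ambient spaces $C_T(M)\subseteq C_T(U)$ and the upgrade from the connected set $\Lambda$ to the connected component $\Gamma$, details the paper leaves implicit.
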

\begin{proof}
  Applying Theorem \ref{SVDAEs:main-thm} and Lemma \ref{obs1} the
  thesis follows readily.
\end{proof}

As a consequence of Theorem \ref{SVDAEs:main-thm} and Lemma
\ref{obs1}, we now establish the following continuation result

\begin{corollary}\label{SVDAEs:cor1}
  Let $a$, $f$, $h$, $g$, $U$ and $F$ be as in Theorem
  \ref{SVDAEs:main-thm}.  Assume that $M = g^{-1}(0)$ is closed in
  $\rkrs$. Let $V\subseteq U$ be open and such that $\deg(F,V)$ is
  well defined and nonzero. Then, there exists a connected component
  $\Gamma$ of $T$-periodic pairs to \eqref{SVDAEs:0a} that meets the
  set
  \begin{equation*}
    \big\{(0,(\su{p},\su{q}))\in [0,\infty)\times C_T(U): (p,q)\in V\cap
    F^{-1}(0,0)\big\}
  \end{equation*}
  and is either unbounded or meets
  \begin{equation*}
    \big\{(0,(\su{p},\su{q}))\in [0,\infty)\times C_T(U): (p,q)\in
    F^{-1}(0,0)\setminus V\big\}.
  \end{equation*}
\end{corollary}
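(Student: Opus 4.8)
The plan is to apply Theorem \ref{SVDAEs:main-thm} with a suitably chosen open set $\Omega$, and then use Lemma \ref{obs1} to pin down the behaviour of the resulting connected component. First I would set $\Omega = [0,\infty)\times C_T(U)$, so that $\Omega\cap U = U$; however $\deg(F,U)$ need not be defined, so instead I would work with the open set $V$ given in the hypothesis. Since $\deg(F,V)$ is well defined and nonzero, Theorem \ref{SVDAEs:main-thm} applied with $\Omega = [0,\infty)\times C_T(V)$ would produce a connected set of nontrivial $T$-periodic pairs meeting $\{(0,(\su p,\su q)) : (p,q)\in V\cap F^{-1}(0,0)\}$. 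This connected set is contained in some connected component $\Gamma$ of the set of all $T$-periodic pairs to \eqref{SVDAEs:0a} in $[0,\infty)\times C_T(M)$ (here I use that $M$ is closed in $\rkrs$, so $C_T(M)$ is complete and components are closed); that $\Gamma$ meets the prescribed trivial-pair set is then immediate.

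Next I would show $\Gamma$ is either unbounded or meets the set of trivial pairs lying outside $V$. Suppose $\Gamma$ is bounded. By Ascoli's theorem together with the completeness of $[0,\infty)\times C_T(M)$ (exactly as in the proof of Lemma \ref{obs1}), a bounded set of $T$-periodic pairs is relatively compact, so $\Gamma$, being closed and bounded, is compact. Now consider $\Omega := [0,\infty)\times C_T(V)$. Since $V$ is open, $C_T(V)$ is open in $C_T(\rkrs)$, hence $\Omega$ is open in $[0,\infty)\times C_T(M)$. By Theorem \ref{SVDAEs:main-thm} the connected set of nontrivial pairs it produces has noncompact closure in $\Omega$; since this set sits inside $\Gamma$, which is compact, the component $\Gamma$ cannot itself be contained in $\Omega$. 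Therefore $\Gamma$ must exit $\Omega$, i.e.\ there is a pair $(\lambda,(x,y))\in\Gamma$ with $(x(t),y(t))\notin V$ for some $t$. The final step is to upgrade this to a \emph{trivial} pair outside $V$: since $\Gamma$ is connected and meets both $\Omega$ and its complement in $[0,\infty)\times C_T(M)$, it meets the relative boundary $\D\Omega$; a point of $\Gamma\cap\D\Omega$ is a $T$-periodic pair $(\lambda,(x,y))$ whose trajectory touches $\D V$. To conclude it lands among the trivial pairs, I would invoke the equivalence with the ODE \eqref{SVDAEs:eq-su-M} on $M$ and the standard fact (used implicitly throughout, via Theorem \ref{UT:Theorem-3.3-spaSepVar}) that a connected component of $T$-periodic pairs emanating from $\omega^{-1}(0)$ which fails to be unbounded must return to $\omega^{-1}(0)$; here $\omega = \Psi$ and $\Psi^{-1}(0)\cap M = F^{-1}(0,0)$ by Theorem \ref{SVDAEs:teo1} (or directly from the definition \eqref{SVDAEs:03}).

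The main obstacle I anticipate is the last step: passing from ``$\Gamma$ meets $\D V$'' to ``$\Gamma$ meets the trivial pairs in $F^{-1}(0,0)\setminus V$''. The cleanest route is probably not through $\D V$ at all, but rather to apply Theorem \ref{UT:Theorem-3.3-spaSepVar} directly with $\Sigma$ a slightly enlarged open set — for instance $\Sigma = [0,\infty)\times C_T(W)$ for an open $W$ with $V\cap F^{-1}(0,0)\subseteq W$ but chosen so that $W$ excludes the other zeros of $F$ — using the excision property of the degree to keep $\deg(F, W) = \deg(F,V)\neq 0$, and then reading off from the noncompactness of the resulting connected set that its closure in $\Sigma$ must escape every bounded piece, which (if $\Gamma$ is bounded) forces it to accumulate on $\omega^{-1}(0)\cap(\Sigma\setminus V)$. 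Reconciling the two versions of ``noncompact closure'' (in $\Omega$ versus in $[0,\infty)\times C_T(U)$) requires the same closedness remark as at the end of the proof of Theorem \ref{SVDAEs:main-thm}: since $M$ is closed in $\rkrs$, relatively closed subsets of $\mathcal{O}$ and of $\Omega$ coincide, so no information is lost in the translation. Modulo this bookkeeping, the statement follows from Theorem \ref{SVDAEs:main-thm} and Lemma \ref{obs1} essentially by a connectedness argument.
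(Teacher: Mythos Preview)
Your overall strategy is right --- apply Theorem~\ref{SVDAEs:main-thm} with a well-chosen $\Omega$ and then use Lemma~\ref{obs1} --- but your choice $\Omega=[0,\infty)\times C_T(V)$ leaves exactly the gap you yourself identify, and your proposed patches do not close it. With that $\Omega$, the boundary $\partial\Omega$ consists of pairs $(\lambda,(x,y))$ whose trajectory merely \emph{touches} $\partial V$; there is no reason such a pair should be trivial, let alone correspond to a zero of $F$ outside $V$. The ``standard fact'' you invoke (a bounded component emanating from $\omega^{-1}(0)$ must return to $\omega^{-1}(0)$) is precisely the content of the corollary you are proving, so appealing to it is circular. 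Replacing $V$ by some $W$ with the same zeros of $F$ does not help: the boundary of $[0,\infty)\times C_T(W)$ still consists of pairs grazing $\partial W$, not of trivial pairs.

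The paper's proof avoids this entirely by a different choice of $\Omega$: instead of restricting trajectories to $V$, one removes from $[0,\infty)\times C_T(U)$ only the \emph{trivial pairs} corresponding to zeros of $F$ outside $V$, namely
\[
\Omega=\Big([0,\infty)\times C_T(U)\Big)\setminus\Big\{(0,(\su p,\su q)):(p,q)\in F^{-1}(0,0)\setminus V\Big\}.
\]
This set is open (the removed set is closed), and by excision $\deg(F,\Omega\cap U)=\deg(F,V)\neq 0$ since the zeros of $F$ in $\Omega\cap U$ are exactly those in $V$. Theorem~\ref{SVDAEs:main-thm} then produces the connected set, and the crucial point is that now $\partial\Omega$ is \emph{exactly} the set of trivial pairs with $(p,q)\in F^{-1}(0,0)\setminus V$. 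Hence when Lemma~\ref{obs1} forces a bounded $\Gamma$ to meet $\partial\Omega$, the conclusion is immediate --- no upgrading step is needed. The missing idea in your argument is this: cut out points from the \emph{target} set (the trivial pairs you want $\Gamma$ to hit), not from the \emph{ambient} state space.
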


\begin{proof}
  Consider the open subset $\Omega$ of $[0,\infty)\times C_T(U)$ given
  by
  \begin{equation*}
    \Big([0,\infty)\times C_T(U)\Big)
    \Big\backslash \Big\{ (0,(\cl{p},\su{q}))\in [0,\infty)\times
    C_T(U): (p,q)\in F^{-1}(0,0)\setminus V\Big\}.
  \end{equation*}
  Clearly, we have that $U\cap \Omega=V$ and hence $\deg(F,U\cap
  \Omega)\neq 0$.  Thus, Theorem \ref{SVDAEs:main-thm} implies the
  existence of a connected component $\Gamma$ of $T$-periodic pairs of
  \eqref{SVDAEs:0a} that meets the set of the trivial $T$-periodic
  pairs $\{(0,\su p,\su q)\in\Omega:F(p,q)=0\}$, and whose
  intersection with $\Omega$ is not compact. By Lemma \ref{obs1}, if
  $\Gamma$ is bounded, then it necessarily intersects the boundary of
  $\Omega$ which is given by
  \begin{equation*}
    \big\{ (0,(\su{p},\su{q}))\in [0,\infty)\times C_T(U): (p,q)\in
    F^{-1}(0,0)\setminus V\big\}.
  \end{equation*}
  Then, the conclusion follows.
\end{proof}

Now, we give some examples in order to illustrate our results.

\begin{example} \label{SVDAEs:firstExample} We examine the following
  scalar DAE:
  \begin{equation} \label{SVDAEs:0022} \left\{ \begin{array}{l} \dot x
        = \frac{a(t)}{b(t)}x + \lambda h(t, x, y), \\ g(b(t)x, y) = 0, \\
      \end{array} \right.
  \end{equation} where $a : \R\to\R$,  $b : \R\to (0, \infty)$, 
  $g : \R^2 \to \R$ and $h : \R \X \R^2  \to \R$ are
  continuous functions. We actually assume  
  that $b\in C^{\infty}$,
  $h$ is $C^1$, $g$ is $C^{\infty}$
  with the property that $\partial_2 g (p, q)$ is nonzero for all
  $(p,q)\in \R^2$.  Given $T>0$, we also require that $a, b$ are
  $T$-periodic and, that
  $h$ is $T$-periodic with respect to its first variable. 
  Consider the change of variables
  \begin{equation} \label{SVDAEs:ch-var} \Theta_b :\R\X \R^2\to \R\X
    \R^2,\,\, (t, (x, y))\mapsto (t, (\boldsymbol{\rmx}, y)),
  \end{equation} 
  with $\boldsymbol{\rmx}=b(t) x$.  Differentiating we get
  \begin{equation*} \dot x(t) =\frac{\dot{\boldsymbol{\rmx}}(t) -
      \dot{b}(t) x(t)}{b(t)}.
  \end{equation*} 
  Problem \eqref{SVDAEs:0022} can be equivalently rewritten in the
  following form
  \begin{equation} \label{SVDAEs:transformed-vinc-mob-1}
    \left\{ \begin{array}{l} \dot{\boldsymbol{\rmx}} =
        \frac{\dot{b}(t) + a(t)}{b(t)}\boldsymbol{ \rmx} + \lambda
        b(t) h(t,
        \frac{\boldsymbol{\rmx}}{b(t)}, y), \\ g(\boldsymbol{\rmx}, y) = 0. \\
      \end{array} \right.
  \end{equation} 
  Assume that $ \frac{1}{T}\int_0^T \frac{a(s)}{b(s)}ds\ne 0$.  Then,
  one has that
  \begin{equation*}
    \frac{1}{T}\int_0^T \frac{\dot{b}(s) + a(s)}{b(s)}ds = 
    \frac{1}{T}\int_0^T \frac{\dot{b}(s)}{b(s)}ds +
    \frac{1}{T}\int_0^T \frac{a(s)}{b(s)}ds =
    \frac{1}{T}\int_0^T \frac{a(s)}{b(s)}ds
    \ne 0,
  \end{equation*} 
  and the system of equations \eqref{SVDAEs:transformed-vinc-mob-1} is
  of type \eqref{SVDAEs:0a}.

  For instance, take $a(t)= |\cos(t)|$, $b(t)= 2+ \sin (t)$,
  $g(x,y)=y^5+y^3+y+x^3$, $T=2\pi$ and let $\Omega\subseteq
  [0,\infty)\X C_{2\pi}(\R^2)$ be the open set given by
  \begin{equation*}
    [0,\infty)\X \big\{(x,y)\in C_{2\pi}(\R^2) \colon 
    x(t)>-1/b(t),\, \forall \ t\in\R \big\}.
  \end{equation*}
  It can be easily checked that $\int_0^{2\pi}a(t)/b(t)dt=2 \ln (3)$.
  Referring to \eqref{SVDAEs:ch-var}, we take into account the induced
  transformation $\h{\Theta}_b\colon [0,\infty)\X C_{2\pi}(\R^2)\to
  [0,\infty)\X C_{2\pi}(\R^2)$ given by
  \begin{equation}\label{SVDAEs:theta-hat}
    \h{\Theta}_b(\lambda, (\varphi, \psi))(t) =\big(\lambda, (b(t)
    \varphi(t), \psi(t))\big), \,\, \forall t\in\R.
  \end{equation} 
  Since $\Theta_b$ is continuous and invertible with continuous
  inverse, then $\h{\Theta}_b$ enjoys the same properties and the
  inverse $\h{\Theta}_b^{-1}\colon [0,\infty)\X C_{2\pi}(\R^2)\to
  [0,\infty)\X C_{2\pi}(\R^2)$ is defined by
  \begin{equation} \label{SVDAEs:inv-utility} \h{\Theta}_b^{-1}(\mu,
    (\zeta, \omega))(t)= \Big(\mu, \Big(\frac{\zeta(t)}{b(t)},
    \omega(t)\Big) \Big), \,\, \forall t\in\R.
  \end{equation} 
  Through the transformation \eqref{SVDAEs:theta-hat}, the open set
  $\Omega$ becomes
  \begin{equation*}
    \h{\Omega}=[0,\infty)\X \big\{(\xbf, y)\in C_{2\pi}(\R^2) :\xbf
    (t)>-1,\, \forall \ t\in\R\big\}.
  \end{equation*}  
  With respect to the Equation \eqref{SVDAEs:transformed-vinc-mob-1},
  the map $F$ in \eqref{SVDAEs:F} is given by $F(\xbf,y)=(\xbf ,
  y^5+y^3+y+\xbf^3)$.  A direct computation shows that
  $F^{-1}((0,0))\cap\h{\Omega}$ consists of the singleton $\{(0, 0)\}$
  and that $\deg(F,\R^2\cap\h{\Omega})=1$.  Therefore, Theorem
  \ref{SVDAEs:main-thm} yields a connected set $\Pi \subseteq
  \h{\Omega}$ of nontrivial $2\pi$-periodic pairs to
  \eqref{SVDAEs:transformed-vinc-mob-1} emanating from the trivial
  $2\pi$-periodic pair $\{(0,(\su 0, \su 0))\}$, whose closure in
  $\h{\Omega}$ is noncompact.

  It is not difficult to prove that $\h{\Theta}_b^{-1}$ sends
  $2\pi$-periodic pairs of \eqref{SVDAEs:transformed-vinc-mob-1} into
  $2\pi$-periodic pairs of \eqref{SVDAEs:0022}. Moreover, the trivial
  $2\pi$-periodic pair $(0,(\su 0, \su 0))$ (which is the unique
  trivial $2\pi$-periodic pair to \eqref{SVDAEs:0022}) is sent to
  itself by $\h{\Theta}_b^{-1}$.  Thus, we can easily infer that
  $\Lambda :=\h{\Theta}_b^{-1}(\Pi)\subseteq \Omega$ is a connected
  set of $2\pi$-periodic pairs to \eqref{SVDAEs:0a}, whose closure in
  $\Omega$ is noncompact and meets $\{(0,(\su 0, \su 0))\}$.
\end{example}

\begin{example}
  Let us now analyze a separated variables DAE coming from a system of
  equations describing the process of heat generation in a exothermic
  chemical reactor. The considered equation, which is derived from a
  well known model (see, e.g. \cite{dae, PANT:1988}), is affected by a
  $T$-periodic perturbation due to the presence of a $T$-perturbation
  pair $(\alpha, h)$, $T>0$ given. In what follows $C_0$ represents
  the initial reactant concentration, $\mathsf{T}_0$ is the initial
  temperature, $\mathsf{T}_c$ is the cooling temperature, $c=c(t)$ and
  $\mathsf{T}=\mathsf{T}(t)$ are concentration and temperature at time
  $t$, and by $R=R(t)$ we denote the reaction rate for unit volume.
  The equation is the following
  \begin{equation} \label{RDAEs:exo-reac}
    \begin{pmatrix}
      \dot c \\
      \dot{\mathsf{T}} \\
      0      \\
    \end{pmatrix} = (1+\alpha(t))\begin{pmatrix}
      k_1 (C_0 - c) - R, \\
      k_1 (\mathsf{T}_0 - \mathsf{T}) +
      k_2R - k_3(\mathsf{T} - \mathsf{T}_c) \\
      R -  k_3 e^{-\frac{k_4 c}{\mathsf{T}}}\\
    \end{pmatrix}
    + \lambda
    \begin{pmatrix}
      h_1 (t, c, \mathsf{T}, R) \\
      h_2 (t, c, \mathsf{T}, R) \\
      0 \\
    \end{pmatrix},
  \end{equation}
  where $k_1, k_2, k_3$ and $k_4$ are given constants.  Here, $\alpha
  : \R \to \R$ is continuous with $\frac{1}{T}\int_0^T\alpha(t)dt =0$.
  The map $h = (h_1, h_2, h_3) \colon \R \times \R^3 \to \R\sp{3}$
  depends on time and on the state of the system, $h$ is $C^1$, and
  both $\alpha$ and $h$ are supposed to be $T$-periodic.  Notice that,
  when $\lambda=0$ and $\alpha\equiv 0$, the system of equations
  \eqref{RDAEs:exo-reac} describes the mentioned exothermic reactor
  model.
  
  Since we want to show how Theorem \ref{SVDAEs:main-thm} applies to
  \eqref{RDAEs:exo-reac}, we are interested in the equation itself
  regardless of its physical meaning. Thus, in what follows, we assume
  that all the quantities involved in \eqref{RDAEs:exo-reac} are
  dimensionless.

  Set $\mathbf{x} = (x_1, x_2) := (c, T)$ and $y := R$.  Let
  $U\subseteq \R^3$ be the open set given by $\{(\mathbf{x},y)\in
  \R^2\X\R \colon x_2>0\}$. Consider the maps $g \colon U \to \R$ and
  $f \colon U \to \R^2$ given by
  \begin{subequations}
    \begin{gather*}
      f(\mathbf{x}, y) = \left( \begin{array}{c|c} \!  A^{12} & A^3
          \! \end{array}\right)
      \begin{pmatrix}
        \mathbf{x} \\
        y \\
      \end{pmatrix} + B \,\quad \textrm{and} \,\quad g(\mathbf{x}, y)
      = y - k_3
      e^{-\frac{k_4 x_1}{x_2}}, \\
      \hspace{-11.8 cm}  \textrm{where}\\
      A^{12}=  \left( \begin{array}{cc} -k_1  & 0  \\
          0 & -(k_1 + k_3)
        \end{array}\right),\,\, 
      A^3 = \left( \begin{array}{cc} -1 \\ k_2 \end{array}\right) \,\,
      \textrm{and}\,\, B = \left( \begin{array}{c}
          k_1 C_0 \\
          k_1 \mathsf{T}_0 + k_3 \mathsf{T}_c \\
        \end{array} \right). 
    \end{gather*}
  \end{subequations}
  It is immediate to check that $\D_2 g \equiv 1$.  Let $F(\mathbf{x},
  y) = (f(\mathbf{x}, y), g(\mathbf{x}, y))$ be as in
  \eqref{SVDAEs:F}. Using the so called ``generalized Gauss
  algorithm'', we get
  \begin{equation*}
    \begin{aligned}
      \det \emph{d}_{(\mathbf{x}, y)}F = \det
      \left( \begin{array}{c|c}
          A^{12} & A^3 \\
          \hline
          \D_{1} g (\mathbf{x}, y) & 1\\
        \end{array} \right) 
      &    =  \det (A^{12} - A^3 \D_1 g (\mathbf{x}, y) ) \\
      & = (k_1 - \eta(\mathbf{x}))(k_1 + k_3) -
      k_1k_2 \eta(\mathbf{x})x_1/x_2,
    \end{aligned}
  \end{equation*} 
  where $\eta(\mathbf{x})= \frac{k_3k_4}{x_2}e^{\frac{-k_4x_1}{x_2}}$.
  Now, for instance, take $k_1= k_3=1/2, k_2=2, k_4=1$ and $C_0>
  (\mathsf{T}_0 + \mathsf{T}_c)/2 >0$. Under these hypotheses, a
  direct computation shows that $F^{-1}(0)$ consists exactly of a
  single point $(\mathbf{x}_0, y_0)$, and that $ \det
  \emph{d}_{(\mathbf{x}_0, y_0)}F \ne0$.  Hence, Theorem
  \ref{SVDAEs:main-thm} applies to the considered DAE, and we can
  conclude that there exists a connected set $\Lambda$ of nontrivial
  $T$-periodic pairs to \eqref{RDAEs:exo-reac} whose closure in
  $\Omega = [0, \infty)\X C_T(U)$ in noncompact, and that meets
  $\{\big(0, (\mathbf{x}_0, y_0)\big)\}$.
\end{example}

As last example, we take into account a problem which is an adapted
version of a retarded equation examined in \cite{BIS-SPA:2011}.

\begin{example} Let $A,E\in\R^{n\X n}$. Consider the following
  implicit differential equation
  \begin{equation} \label{SVDAEs:exx2} E \dot{\boldsymbol{\rmx}} =
    a(t)A \boldsymbol{\rmx} + \lambda C(t)S(\boldsymbol{\rmx}), \,\,\,
    \lambda\geq 0,
  \end{equation} 
  where $a : \R\to\R$, $S\colon \R^n\to\R^n$ and $C\colon\R\to\R^{n\X
    n}$ are continuous, with $a$ and $C$ $T$-periodic, $T>0$.  Assume
  also that $C$ and $E$ satisfies the following relations:
  \begin{equation*}
    \ker\, C^T(t) = \ker\, E^T,\;\forall \,t\in\R,\,\, 
    \text{and $n> \dim\ker\, E^T>0$}.
  \end{equation*}
  In particular, we have that $s=\rank\, E =\rank\, C(t)$ is a
  positive constant for all $t\in\R$.  Under these assumptions,
  \cite[Lemma\ 5.5]{BIS-SPA:2011} applies to the DAE
  \eqref{SVDAEs:exx2}, and so there exist orthogonal matrices $P, Q\in
  \R^{n\X n}$ that realize a singular value decomposition (SVD) for
  $E$, such that \eqref{SVDAEs:exx2} can be equivalently rewritten as
  \begin{equation}\label{svdeq1}
    PEQ^T\dot{\boldsymbol{\mathsf{x}}} =a(t)PAQ^T
    \boldsymbol{\mathsf{x}}+\lambda
    PC(t)Q^TQS(Q^T\boldsymbol{\mathsf{x}}),
  \end{equation} with $\boldsymbol{\rmx}=Q^T \boldsymbol{\mathsf{x}}$,
  \begin{subequations} \label{SVDAEs:PQ}
    \begin{gather*}
      PEQ^T = \begin{pmatrix} \widetilde{E}_s & 0 \\
        0 & 0 \end{pmatrix},\,\, PAQ^T
      = \begin{pmatrix} \widetilde A_{11} & \widetilde A_{12} \\
        \widetilde A_{21} & \widetilde A_{22} \end{pmatrix}, \,\,
      PC(t) Q^T = \begin{pmatrix} \widetilde{C}_s(t) & 0 \\ 0 &
        0 \end{pmatrix},
    \end{gather*} and, setting
    $\mathbf{p}=Q^T\boldsymbol{\mathsf{p}}$, $\mathbf{p}\in \R^n$, we
    also have
    \begin{equation*}
      QS (Q^T\boldsymbol{\mathsf{p}})
      = \begin{pmatrix} \widetilde{S}_s(Q^T\boldsymbol{\mathsf{p}}) \\
        \widetilde{S}_{n-r}
        (Q^T\boldsymbol{\mathsf{p}})\end{pmatrix}, 
    \end{equation*}
  \end{subequations}
  where $\widetilde{E}_s\in \R^{s\X s}$ is a diagonal matrix with
  positive diagonal elements, $ \widetilde{C}_s \in C\big(\R, \R^{s\X
    s})$ is nonsingular for any $t\in\R$, $ \widetilde{S}_s \in
  C\big(\R^n, \R^{s})$ $ \widetilde{S}_{n-s} \in C\big(\R^n,
  \R^{n-s})$, $\widetilde A_{11}\in \R^{s\X s}$ and $\widetilde
  A_{22}\in \R^{n-s\X n-s}$. Decompose the Euclidean space $\R^n$ with
  respect to its orthogonal subspaces $\im E^T\cong \R^s$ and $\ker
  E\cong \R^{n-s}$, so that $\R^n\simeq\R^s\X\R^{n-s}$, and set
  $\boldsymbol{\mathsf{x}}=(x,y)\in \R^s\X\R^{n-s}$.  Thus
  \eqref{svdeq1} becomes
  \begin{equation}
    \label{SVDAEs:exx3} \left\{ \begin{array}{l} 
        \widetilde{E}_s {\dot{{x}}} =a(t)
        \big(\widetilde A_{11} x + \widetilde A_{12} y) + \lambda 
        \widetilde{C}_s(t) S_s
        \big({x}\big),\,\,\, \lambda\geq 0, \\ 
        \widetilde A_{21} x + \widetilde A_{22} y =0.
      \end{array} \right. \\
  \end{equation} 
  Assume that $\widetilde A_{22}$ is invertible. Notice that $\rank
  \widetilde{A}_{22}$ remains the same for any other choice of
  matrices $ P $ and $ Q $ realizing a SVD for $E$, so the required
  condition does not depend on the chosen SVD for $E$.  Define $F(x,
  y) = (\widetilde A_{11} x + \widetilde A_{12} y, \widetilde A_{21} x
  + \widetilde A_{22} y)$, with $(x,y)\in \R^s\X\R^{n-s}$.  Therefore,
  Theorem \ref{SVDAEs:main-thm} ensures the existence of a connected
  subset $\Lambda$ of nontrivial $T$-periodic pairs to
  \eqref{SVDAEs:exx3} whose closure in $[0,\infty)\X
  C_T(\R^s\X\R^{n-s})$ is noncompact and meets the set
  $\big\{(0,(\su{p}, \su{q}))\in [0,\infty)\X C_T(\R^s\X\R^{n-s}) :
  F(p, q) \!=0\!\big\}$. Recasting the argument in \cite[Corollary\
  5.7]{BIS-SPA:2011}, it follows that $\Lambda$ generates an unbounded
  connected set of nontrivial $T$-periodic pairs to
  \eqref{RDAEs:exo-reac} emanating from $\big\{(0,\su{\mathbf{p}})\in
  [0,\infty)\X C_T(\R^n) : A \mathbf{p} \!=0\!\big\}$.
\end{example}

\section{A multiplicity result} \label{SVDAEs:section4}

In this section we give a multiplicity result that can be inferred
from Theorem \ref{SVDAEs:main-thm} and Corollary
\ref{SVDAEs:cor1}. For the reminder of this section $a, f, g, h, U, T$
and $F$ will be as in Section \ref{SVDAEs:section3}.  We will also
suppose that $\frac{1}{T}\int^T_0 a(t) dt=1$, for the sake of
simplicity.  The approach followed here leans on the argument given in
Theorem \ref{SVDAEs:main-thm} and on a local analysis regarding the
set of $T$-periodic solutions to \eqref{SVDAEs:0a}.

Let $(p_0, q_0)$ be an isolated zero of $F$. Then, since $\D_2 g(p_0,
q_0)$ is invertible, we can locally ``decouple'' \eqref{SVDAEs:0a}.
Namely, by the Implicit Function Theorem, there exist neighborhoods $V
\subseteq \R^k$ of $p_0$ and $W \subseteq \R^s$ of $q_0$, and a
$C^1$-function $\gamma : V \to \R^s$ such that $M \cap (V \X W)$ is
the graph of $\gamma$, where $M=g^{−1}(0)$.  Thus, within $V \X W$,
Equation \eqref{SVDAEs:0a} can be rewritten with $y=\gamma(x)$ as
follows:
\begin{equation*}
  \dot x = a(t) f\big(x, \gamma(x)\big) + 
  \lambda h\big(t,x, \gamma(x)\big).
\end{equation*}
Linearizing the above equation at $(p_0, q_0)$, for $\lambda=0$, we
get
\begin{equation} \label{SVDAEs:linearized} \dot{\xi} =
  a(t)\big[\partial_1 f(p_0,q_0) + \partial_2 f(p_0,
  q_0)d_{(p_0,q_0)}\gamma\big]\xi,\,\, \textrm{on}\,\, \R^k,
\end{equation}
which is a nonautonomous linear ODE in $\R^k$.

We will say that $(p_0, q_0)$ is a $T$-\emph{resonant zero} of $F$, if
\eqref{SVDAEs:linearized} admits nonzero $T$-periodic solutions.
Notice that this definition is analogous to the one given in
\cite{spaDAE} in the context of $T$-periodic perturbations to
autonomous semi-explicit DAEs of the form
\begin{equation} \label{SVDAEs:0-non-a} \left\{ \begin{array}{l} \dot
      x = f(x, y) +\lambda h(t, x, y),\,\,\, \lambda\geq 0, \\
      g(x, y) = 0, \\
    \end{array} \right.
\end{equation}
where $f,h$ and $g$ are as above.  \medskip

Let $ \Phi(p_0,q_0)$ be the linear endomorphism of $\R^k$ given by
\begin{equation*}
  \partial_1 f(p_0,q_0) 
  - \partial_2 f(p_0, q_0)[\partial_2 g(p_0,q_0)]^{-1}
  \partial_1 g(p_0,q_0), 
\end{equation*}
then Equation \eqref{SVDAEs:linearized} becomes
\begin{equation} \label{SVDAEs:linearized-rew} \dot{\xi} =
  a(t)\Phi(p_0,q_0)\xi, \,\, \textrm{on}\,\, \R^k.
\end{equation}
Applying to \eqref{SVDAEs:0-non-a} the same linearization procedure
used to obtain \eqref{SVDAEs:linearized-rew}, we get the following
autonomous ODE on $\R^k$
\begin{equation} \label{SVDAEs:linearized-non-a} \dot{\xi} = \Phi(p_0,
  q_0)\xi.
\end{equation}

\noindent The next result shows that the $T$-resonancy condition at
$(p_0, q_0)\in F^{-1}(0)$ does not depend on the presence of the
perturbation factor $a \colon \R \to \R$ in \eqref{SVDAEs:0a}. Namely,
it holds that

\begin{proposition} \label{SVDAEs:equivalence-T-res} Let $(p_0,q_0)\in
  M$ be a zero of $F=(f,g)$. Then $(p_0,q_0)$ is $T$-resonant for
  \eqref{SVDAEs:0a} if and only if it is $T$-resonant for
  \eqref{SVDAEs:0-non-a}.
\end{proposition}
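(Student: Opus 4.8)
The plan is to reduce both $T$-resonancy conditions to a statement about the monodromy (period) operators of the two linear equations $\dot\xi = a(t)\Phi(p_0,q_0)\xi$ and $\dot\xi = \Phi(p_0,q_0)\xi$, and then to show these two operators coincide. Write $\Phi := \Phi(p_0,q_0)$ for brevity. Equation \eqref{SVDAEs:linearized-rew} has fundamental matrix solution $X(t) = \exp\big(\big(\int_0^t a(s)\,ds\big)\Phi\big)$, since $\Phi$ is a constant matrix and hence commutes with itself at all times; likewise \eqref{SVDAEs:linearized-non-a} has fundamental matrix $Y(t) = \exp(t\Phi)$. A linear $T$-periodic (or autonomous) ODE admits a nonzero $T$-periodic solution if and only if its $T$-monodromy operator has $1$ as an eigenvalue, equivalently $\det\big(X(T) - I\big) = 0$. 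So $T$-resonancy of $(p_0,q_0)$ for \eqref{SVDAEs:0a} is equivalent to $1 \in \operatorname{spec}\big(\exp((\int_0^T a(s)\,ds)\Phi)\big)$, and $T$-resonancy for \eqref{SVDAEs:0-non-a} is equivalent to $1 \in \operatorname{spec}\big(\exp(T\Phi)\big)$.

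First I would invoke the normalization assumed in this section, namely $\frac{1}{T}\int_0^T a(t)\,dt = 1$, which gives $\int_0^T a(s)\,ds = T$. Therefore $X(T) = \exp(T\Phi) = Y(T)$: the two monodromy operators are literally the same matrix. Consequently $\det(X(T)-I) = 0$ if and only if $\det(Y(T)-I)=0$, and the two $T$-resonancy conditions are equivalent. This is the whole argument once the monodromy reformulation is in place.

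For completeness I would also record the alternative route that does not lean on the normalization, since it clarifies what is really going on: the substitution $\tau = \phi_a(t) = \int_0^t a(s)\,ds$ (the map introduced in Remark \ref{first-remark}) transforms solutions of $\dot\xi = a(t)\Phi\xi$ into solutions of $\dot\eta = \Phi\eta$ via $\eta(\tau) = \xi(\phi_a^{-1}(\tau))$, exactly as in Remark \ref{first-remark} applied to this linear setting. Since $\phi_a(T) = \int_0^T a = T$ under our normalization and $\phi_a(0)=0$, a solution $\xi$ is $T$-periodic iff the corresponding $\eta$ satisfies $\eta(T) = \eta(0)$, i.e. is $T$-periodic; and $\xi \not\equiv 0$ iff $\eta \not\equiv 0$. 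This gives a bijection between nonzero $T$-periodic solutions of the two linearizations, hence the equivalence of $T$-resonancy.

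The only point that needs care — and it is the place where the argument could go wrong if one is not attentive — is the reduction of \eqref{SVDAEs:linearized} to \eqref{SVDAEs:linearized-rew}, i.e. checking that $\partial_2 f(p_0,q_0)\, d_{(p_0,q_0)}\gamma = -\,\partial_2 f(p_0,q_0)[\partial_2 g(p_0,q_0)]^{-1}\partial_1 g(p_0,q_0)$. But this is immediate from differentiating the identity $g(x,\gamma(x)) \equiv 0$ at $x = p_0$, which yields $\partial_1 g(p_0,q_0) + \partial_2 g(p_0,q_0)\, d_{(p_0,q_0)}\gamma = 0$, hence $d_{(p_0,q_0)}\gamma = -[\partial_2 g(p_0,q_0)]^{-1}\partial_1 g(p_0,q_0)$; this has already been observed in the passage deriving \eqref{SVDAEs:linearized-rew}, so no real obstacle remains. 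In short, the substance of the proposition is the trivial observation $\int_0^T a(s)\,ds = T$ combined with the fact that for a constant coefficient matrix the monodromy over $[0,T]$ depends on $a$ only through this integral; the rest is bookkeeping.
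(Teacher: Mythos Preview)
Your proof is correct and is essentially the same as the paper's: the paper phrases the argument in terms of the Poincar\'e $T$-translation operators $P^{a\Phi}_T$ and $P^{\Phi}_T$ and appeals to Remark \ref{SVDAEs:remark-on-ivp} to conclude that they agree, which for these linear equations is exactly your observation that the monodromy matrices $X(T)=\exp\big((\int_0^T a)\,\Phi\big)$ and $Y(T)=\exp(T\Phi)$ coincide under the normalization $\int_0^T a = T$. Your explicit fundamental-matrix computation is in fact spelled out in the Remark immediately following the proposition, and your ``alternative route'' via the substitution $\phi_a$ is precisely the content of the cited Remark \ref{SVDAEs:remark-on-ivp}; the only small caveat is that $\phi_a$ need not be globally invertible when $a$ changes sign, but your main argument does not rely on this and the relation $\xi(t)=\eta(\phi_a(t))$ works in one direction without inversion.
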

\begin{proof}
  Let $P^{a\Phi}_\tau$ and $P^{\Phi}_\tau$ be the local Poincar\'e
  $\tau$-translation operators, $\tau\in \R$, associated to the
  equations \eqref{SVDAEs:linearized-rew} and
  \eqref{SVDAEs:linearized-non-a} respectively.
  
  Assume that $(p_0, q_0)$ is a $T$-resonant zero for the equation
  \eqref{SVDAEs:0a}, and let $\xi_0\in \R^k$ be an initial point of a
  $T$-periodic solution $\xi$ to \eqref{SVDAEs:linearized-rew}.  In
  such a case, $P^{a\Phi}_T(\xi_0)$ is defined and it holds true that
  $P^{a\Phi}_T(\xi_0)=\xi_0$.  Arguing as in Remark
  \ref{SVDAEs:remark-on-ivp}, it follows that also $P^{\Phi}_T
  (\xi_0)$ is defined and that $P^{\Phi}_T(\xi_0)=
  P^{a\Phi}_T(\xi_0)=\xi_0$. This means that $\xi_0$ is an initial
  point of a nonzero $T$-periodic solution of $\dot \xi = \Phi(p_0,
  q_0)\xi$. Hence, $(p_0, q_0)$ is $T$-resonant for the Equation
  \eqref{SVDAEs:0-non-a} too.  The converse implication is
  straightforward.
\end{proof}

\begin{remark}
  The $T$-resonancy condition at $(p_0, q_0)$ can be read on the
  spectrum $\sigma (\Phi (p_0, q_0))$ of $\Phi(p_0, q_0)$. Indeed, the
  (unique) solution to the the Cauchy problem
  \begin{equation*} \label{SVDAEs:cauchy-lin} \dot{\xi} =
    a(t)\Phi(p_0,q_0)\xi, \,\,\, \xi(0)=\xi_0
  \end{equation*}
  is given by
  \begin{equation*}
    \xi = e^{-\int_0^ta(s)ds\Phi(p_0, q_0)}\xi_0.
  \end{equation*}
  Hence, $\xi$ is a $T$-periodic solution to \eqref{SVDAEs:linearized}
  if and only if $\xi_0\in \ker (I-e^{T\Phi(p_0,q_0)})$, where $I$ is
  the identity on $\R^k$.  Thus, $(p_0,q_0)$ is $T$-resonant if and
  only if, for some $n \in \Z$ one has that $\frac{2n\pi i}{T}\in
  \sigma(\Phi (p_0, q_0))$, with $i$ the imaginary unit.  Again, using
  the generalized Gauss algorithm, we obtain
  \begin{equation*}
    \begin{aligned}
      \det \emph{d}_{(p_0,q_0)}F = \det \big(\D_2g(p_0, q_0)\big)\cdot
      \det\, (\Phi(p_0,q_0)),
    \end{aligned}
  \end{equation*}
  so, if $(p_0, q_0)$ is non-$T$-resonant, then it is a nondegenerate
  zero of $F$.  In particular, we have that $\emph{i} (F, (p_0,
  q_0))\ne 0$.
\end{remark}

We have the following lemma

\begin{lemma} \label{SVDAEs:mr} Suppose that $a, f, g, h, U, T$ and
  $F$ be as in Theorem \ref{SVDAEs:main-thm}.  Let $(p_0,q_0)$ be a
  non-$T$-resonant zero of $F$. Then
  \begin{itemize}
  \item[(1)] the trivial $T$-periodic pair $(0, (\overline{p}_0 ,
    \overline{q}_0))$ is isolated in the set of $T$-periodic pairs
    corresponding to $\lambda = 0$;
  \item[(2)] there exists a connected set of nontrivial $T$-periodic
    pairs to \eqref{SVDAEs:0a} whose closure in $[0, \infty) \X C_T (U
    )$ contains $(0, (\overline{p}_0 , \overline{q}_0))$ and is either
    noncompact or intersects
    \begin{equation*}
      \{ (0, (\cl{p},\su{q}))\in [0,\infty)\times
      C_T(U): (p,q)\in F^{-1}(0,0)\}\backslash 
      \{(0, (\overline{p}_0 , \overline{q}_0 ))\}.
    \end{equation*}
  \end{itemize}
\end{lemma}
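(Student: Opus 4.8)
The plan is to reduce the statement to the two main tools already developed: the degree-reduction theorem (Theorem~\ref{SVDAEs:teo1}) together with the global continuation result (Theorem~\ref{SVDAEs:main-thm}), and the topological lemma (Lemma~\ref{obs1}), exploiting the non-$T$-resonancy hypothesis to produce a local isolation statement. First, for part (1), I would argue that since $(p_0,q_0)$ is non-$T$-resonant, by the Remark preceding this lemma it is a nondegenerate zero of $F$, hence an isolated zero of $F$. Passing to the equivalent ODE \eqref{SVDAEs:eq-su-M} on $M$, the point $(p_0,q_0)$ corresponds to a nondegenerate (hence isolated) zero of the tangent vector field $\Psi$; and by the analysis in Section~\ref{SVDAEs:section4} and Proposition~\ref{SVDAEs:equivalence-T-res}, the non-$T$-resonancy also guarantees that the Poincar\'e $T$-translation operator associated with the linearized equation has no nonzero fixed points near $(p_0,q_0)$. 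Combining the isolation of the zero with a standard implicit-function-theorem / contraction argument for the nonlinear Poincar\'e operator $P^{a\Psi}_T$ in a small ball of $M$, one concludes that $(\overline p_0,\overline q_0)$ is the only $T$-periodic solution of \eqref{SVDAEs:0a} (for $\lambda=0$) in a suitable $C_T$-neighborhood, which is exactly (1).

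For part (2), I would choose a small open set $V\subseteq U$ around $(p_0,q_0)$ with $\overline V$ compact, $\overline V\subseteq U$, and $F^{-1}(0,0)\cap \overline V=\{(p_0,q_0)\}$; this is possible because $(p_0,q_0)$ is an isolated zero of $F$. Then $\deg(F,V)=\mathrm{i}(F,(p_0,q_0))$ is well defined, and by the Remark above it is nonzero (since a non-$T$-resonant zero is nondegenerate, $\mathrm{i}(F,(p_0,q_0))=\sign\det d_{(p_0,q_0)}F\neq 0$). Now I would apply Corollary~\ref{SVDAEs:cor1} with this $V$ — but note that corollary assumes $M$ closed in $\rkrs$, which is \emph{not} among the hypotheses of Lemma~\ref{SVDAEs:mr}; so instead I would apply Theorem~\ref{SVDAEs:main-thm} directly with the open set
\begin{equation*}
  \Omega = \Big([0,\infty)\X C_T(U)\Big)\Big\backslash
  \Big\{(0,(\overline p,\overline q)) : (p,q)\in F^{-1}(0,0)\setminus V\Big\}.
\end{equation*}
One checks $\Omega\cap U=V$, so $\deg(F,\Omega\cap U)=\deg(F,V)\neq 0$, and Theorem~\ref{SVDAEs:main-thm} yields a connected set $\Gamma$ of nontrivial $T$-periodic pairs whose closure in $\Omega$ is noncompact and meets the set of trivial pairs in $\Omega$, which (since $F^{-1}(0,0)\cap \overline V=\{(p_0,q_0)\}$) is just $\{(0,(\overline p_0,\overline q_0))\}$; hence that closure contains $(0,(\overline p_0,\overline q_0))$. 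Finally I would translate "closure in $\Omega$ noncompact" into the dichotomy stated in (2): a subset of $[0,\infty)\X C_T(U)$ whose closure in $\Omega$ is noncompact either has noncompact closure in the whole space $[0,\infty)\X C_T(U)$, or its closure must approach the complement $\big([0,\infty)\X C_T(U)\big)\setminus\Omega$, i.e.\ meets $\{(0,(\overline p,\overline q)) : (p,q)\in F^{-1}(0,0)\}\setminus\{(0,(\overline p_0,\overline q_0))\}$; this is the standard "boundary bumping" alternative.

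The step I expect to be the main obstacle is part (1), i.e.\ showing that the trivial pair is genuinely isolated \emph{in the function space} $C_T(U)$ among $\lambda=0$ solutions, not merely that $(p_0,q_0)$ is an isolated zero of $F$ in $U$. For $\lambda=0$ the equation \eqref{SVDAEs:0a} still admits the possibly-nonconstant $T$-periodic solutions of the autonomous-up-to-time-reparametrization system, so one cannot simply invoke "zeros of $F$ are isolated." The right move is to work on the manifold $M$ with the flow of $a(t)\Psi$: a $T$-periodic solution near $(\overline p_0,\overline q_0)$ corresponds to a fixed point of $P^{a\Psi}_T$ near $(p_0,q_0)$, and by Remark~\ref{SVDAEs:remark-on-ivp} this is the same as a fixed point of $P^{\Psi}_T$; non-$T$-resonancy says the differential of $P^{\Psi}_T$ at $(p_0,q_0)$ — namely $e^{T\Phi(p_0,q_0)}$ restricted to $T_{(p_0,q_0)}M\cong\R^k$ — has no eigenvalue $1$, so $I-d_{(p_0,q_0)}P^{\Psi}_T$ is invertible and the inverse function theorem gives local uniqueness of the fixed point. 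One must be slightly careful that the resulting $C_T$-neighborhood of $(\overline p_0,\overline q_0)$ really excludes all other $T$-periodic pairs with $\lambda=0$ and not just those staying in a fixed small tube; a continuity/compactness argument (any $T$-periodic solution $C_T$-close to a constant stays in a small tube for all $t$) closes this gap.
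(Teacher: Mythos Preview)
Your argument is correct and more self-contained than the paper's own proof. The paper disposes of the lemma in one line: it invokes Proposition~\ref{SVDAEs:equivalence-T-res} to transfer the non-$T$-resonancy hypothesis to the autonomous perturbation problem \eqref{SVDAEs:0-non-a}, and then cites \cite[Lemma~5.5]{spaDAE}, the analogous isolation-and-continuation statement already proved in that reference for the autonomous case. Your route instead reproves the content of that external lemma directly with the tools of the present paper: part~(1) via the linearized Poincar\'e map on $M$ and the inverse function theorem (using Remark~\ref{SVDAEs:remark-on-ivp} to identify fixed points of $P^{a\Psi}_T$ and $P^{\Psi}_T$, so that non-$T$-resonancy gives invertibility of $I-e^{T\Phi(p_0,q_0)}$ and hence local uniqueness of the fixed point), and part~(2) via Theorem~\ref{SVDAEs:main-thm} applied to the set $\Omega$ obtained by excising the other trivial pairs, followed by the standard closure dichotomy. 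What you gain is independence from \cite{spaDAE}; what the paper gains is brevity.

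One small imprecision worth noting: your claim $\Omega\cap U=V$ is not literally true (the same slip occurs in the paper's proof of Corollary~\ref{SVDAEs:cor1}); in fact $\Omega\cap U=U\setminus\big(F^{-1}(0,0)\setminus V\big)$. This is harmless, since $F^{-1}(0,0)\cap(\Omega\cap U)=F^{-1}(0,0)\cap V=\{(p_0,q_0)\}$ and excision gives $\deg(F,\Omega\cap U)=\deg(F,V)=\mathrm{i}(F,(p_0,q_0))\neq 0$, so your argument goes through unchanged.
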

\begin{proof}
  We apply Proposition \ref{SVDAEs:equivalence-T-res} and \cite[Lemma\
  5.5]{spaDAE}.
\end{proof}
\medskip

Here, we fix some further notation.  Let $Y$ be a metric space and $X$
a subset of $[0, \infty )\X Y$.  Given $\mu\geq 0$, we denote by
$X_\mu$ the slice $\{y \in Y\colon (\mu, y) \in X\}$.  Using the
convention introduced in \eqref{SVDAEs:graph}, $Y$ will be identified
with the subset $\{0\} \X Y \subseteq [0, \infty)\X Y$.

Let X be a subset of $[0, \infty)\X Y$.  We say that a subset $A$ of
$X_0$ is an \emph{ejecting set} (for $X$) if it is relatively open in
$X_0$ and there exists a connected subset of $X$ which meets $A$ and
is not included in $X_0$. In particular, we say that $(p_0, q_0)\in
X_0$ is an ejecting point for $X$ if $\{(p_0, q_0)\}$ is an ejecting
set.  Notice that $(p_0, q_0)$ is isolated in $X_0$ being $\{(p_0,
q_0)\}$ open in $X_0$.  \smallskip

We now give a sufficient condition for the existence of ejecting
points for the set of the $T$-periodic pairs of \eqref{SVDAEs:0a}.

\begin{corollary}
  Let $(p_0, q_0)$ be a non-$T$-resonant zero of $F \colon U \to
  \rkrs$. Then, $(p_0, q_0)$ (regarded as the trivial $T$-periodic
  pair $(0, (\overline{p}_0, \overline{q}_0))$) is an ejecting point
  for the set of the $T$-periodic pairs of \eqref{SVDAEs:0a}.
\end{corollary}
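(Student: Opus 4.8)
The plan is to derive this corollary directly from Lemma \ref{SVDAEs:mr} together with part (1) of that lemma and the definition of ejecting point just introduced. First I would invoke Lemma \ref{SVDAEs:mr}(1): since $(p_0,q_0)$ is a non-$T$-resonant zero of $F$, the trivial $T$-periodic pair $(0,(\overline{p}_0,\overline{q}_0))$ is isolated in the set of $T$-periodic pairs corresponding to $\lambda=0$; that is, writing $X$ for the set of all $T$-periodic pairs of \eqref{SVDAEs:0a}, the slice $X_0$ contains $(0,(\overline{p}_0,\overline{q}_0))$ as an isolated point, so the singleton $\{(0,(\overline{p}_0,\overline{q}_0))\}$ is relatively open in $X_0$. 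This already gives the ``relatively open'' half of the definition of ejecting set.

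Next I would use Lemma \ref{SVDAEs:mr}(2) to produce the connected subset of $X$ required by the definition. By (2) there is a connected set $\Gamma$ of nontrivial $T$-periodic pairs of \eqref{SVDAEs:0a} whose closure $\overline{\Gamma}$ in $[0,\infty)\X C_T(U)$ contains $(0,(\overline{p}_0,\overline{q}_0))$. Adjoining that single limit point, $\Gamma\cup\{(0,(\overline{p}_0,\overline{q}_0))\}$ is still connected (adding a point in the closure of a connected set preserves connectedness) and is contained in $X$. Since every element of $\Gamma$ is a \emph{nontrivial} $T$-periodic pair, in particular $\Gamma$ is nonempty and not contained in the slice $X_0$ (a nontrivial pair either has $\lambda>0$, or is a nonconstant solution at $\lambda=0$, and in either case lies outside the set of constant solutions identified with points of $U$ — more directly, $\Gamma$ being nonempty and consisting of nonconstant or positive-$\lambda$ pairs means $\Gamma\not\subseteq X_0$). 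Thus $A:=\{(0,(\overline{p}_0,\overline{q}_0))\}$ meets this connected subset of $X$ which is not included in $X_0$, so $A$ is an ejecting set, i.e. $(p_0,q_0)$ is an ejecting point for the set of $T$-periodic pairs of \eqref{SVDAEs:0a}.

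The argument is essentially a repackaging of Lemma \ref{SVDAEs:mr} into the language of ejecting points, so there is no serious obstacle; the only point requiring a small amount of care is checking that the connected set supplied by Lemma \ref{SVDAEs:mr}(2), after adjoining the limit point $(0,(\overline{p}_0,\overline{q}_0))$, is genuinely not contained in the slice $X_0$ — this is where one uses that the set consists of \emph{nontrivial} $T$-periodic pairs and hence is nonempty and escapes the zero-$\lambda$ constant-solution slice. (Whether the connected set is noncompact or meets other trivial pairs, as in the dichotomy of Lemma \ref{SVDAEs:mr}(2), is irrelevant for the notion of ejecting point, which only demands that the connected set not be included in $X_0$.) I would close by remarking that this corollary is exactly the hypothesis needed to feed a general multiplicity principle for ejecting points, which is presumably the next ingredient in the section.

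\begin{proof}
  Write $X\subseteq[0,\infty)\X C_T(U)$ for the set of all
  $T$-periodic pairs of \eqref{SVDAEs:0a}, so that $X_0$ is the set of
  $T$-periodic pairs corresponding to $\lambda=0$, and recall that
  $U$ is identified with $\{0\}\X C_T(U)$ via constant functions.  By
  Lemma \ref{SVDAEs:mr}(1), since $(p_0,q_0)$ is a non-$T$-resonant
  zero of $F$, the trivial pair $(0,(\overline{p}_0,\overline{q}_0))$
  is isolated in $X_0$; hence the singleton
  $A:=\{(0,(\overline{p}_0,\overline{q}_0))\}$ is relatively open in
  $X_0$.

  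By Lemma \ref{SVDAEs:mr}(2) there is a connected set $\Gamma$ of
  \emph{nontrivial} $T$-periodic pairs of \eqref{SVDAEs:0a} whose
  closure in $[0,\infty)\X C_T(U)$ contains
  $(0,(\overline{p}_0,\overline{q}_0))$.  In particular $\Gamma\ne
  \emptyset$ and $\Gamma\subseteq X$.  Set
  $\widehat{\Gamma}:=\Gamma\cup\{(0,(\overline{p}_0,\overline{q}_0))\}$.
  Since $(0,(\overline{p}_0,\overline{q}_0))$ lies in the closure of
  the connected set $\Gamma$, the set $\widehat{\Gamma}$ is connected;
  moreover $\widehat{\Gamma}\subseteq X$.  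As every element of
  $\Gamma$ is a nontrivial $T$-periodic pair, $\Gamma$ is not
  contained in $X_0$, and therefore neither is $\widehat{\Gamma}$.

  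Thus $A$ is relatively open in $X_0$, and the connected subset
  $\widehat{\Gamma}$ of $X$ meets $A$ and is not included in $X_0$.
  By definition, $A$ is an ejecting set for $X$, i.e. $(p_0,q_0)$
  (regarded as the trivial $T$-periodic pair
  $(0,(\overline{p}_0,\overline{q}_0))$) is an ejecting point for the
  set of the $T$-periodic pairs of \eqref{SVDAEs:0a}.
\end{proof}
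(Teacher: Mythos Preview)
Your approach is essentially the paper's --- both derive the corollary directly from Lemma \ref{SVDAEs:mr} --- and your write-up is in fact more detailed than the paper's two-line proof. There is, however, one slip: you conflate the slice $X_0$ with the set of \emph{trivial} $T$-periodic pairs. By definition $X_0=\{(x,y)\in C_T(U):(0,(x,y))\in X\}$ consists of \emph{all} $T$-periodic solutions of \eqref{SVDAEs:0a} at $\lambda=0$, constant or not. A nontrivial $T$-periodic pair may very well lie in $X_0$ --- any nonconstant $T$-periodic solution of the unperturbed equation \eqref{SVDAEs:0} does --- so the inference ``every element of $\Gamma$ is nontrivial, hence $\Gamma\not\subseteq X_0$'' is not valid as stated (and your parenthetical remark that nontrivial pairs ``escape the zero-$\lambda$ constant-solution slice'' describes the wrong slice).

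The repair is immediate and uses part (1), which you have already invoked. Suppose for contradiction that $\widehat\Gamma\subseteq X_0$. Since $(0,(\overline{p}_0,\overline{q}_0))$ is isolated in $X_0$ by Lemma \ref{SVDAEs:mr}(1), the singleton $\{(0,(\overline{p}_0,\overline{q}_0))\}$ is open (and, being a point, closed) in $\widehat\Gamma$; connectedness of $\widehat\Gamma$ then forces $\widehat\Gamma=\{(0,(\overline{p}_0,\overline{q}_0))\}$, i.e.\ $\Gamma=\emptyset$, contradicting the nonemptiness of $\Gamma$ supplied by Lemma \ref{SVDAEs:mr}(2). With this correction your argument is complete and matches the paper's.
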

\begin{proof} Let $X$ be the subset of $[0,\infty)\X C_T(U)$ of all
  the $T$-periodic pairs to \eqref{SVDAEs:0a}.  Due to Lemma
  \ref{SVDAEs:mr}, the non-$T$-resonant zero $(p_0, q_0)$ results to
  be an isolated point of $X_0$, which means that it is ejecting.
\end{proof}

We are now ready to establish the following result

\begin{proposition} \label{SVDAEs:last-prop} Let $a, f, g, h, U, T$
  and $F$ be as in Theorem \ref{SVDAEs:main-thm}. Assume also that $M
  = g^{-1}(0)$ is closed in $\rkrs$.  Let $\{(p_i, q_i)\}, i=1,\ldots,
  r,$ be non-$T$-resonant zeros of $F$ such that
  \begin{equation*}
    \deg(F,U) \ne \sum^r_{j=1} \emph{i} (F, (p_j , q_j)).
  \end{equation*}
  Suppose that \eqref{SVDAEs:0} does not admit an unbounded connected
  set of $T$-periodic solutions in $C_T (U)$. Then, there are at least
  $r+1$ different $T$-periodic solutions of \eqref{SVDAEs:0a} when
  $\lambda > 0$ is sufficiently small.
\end{proposition}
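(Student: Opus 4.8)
The plan is to combine the degree-theoretic continuation result of Theorem~\ref{SVDAEs:main-thm} with the local ejecting-point analysis furnished by Lemma~\ref{SVDAEs:mr}, in the spirit of the standard multiplicity argument for perturbed equations on manifolds. First I would observe that, by the Remark preceding Lemma~\ref{SVDAEs:mr}, each non-$T$-resonant zero $(p_j,q_j)$ is a nondegenerate zero of $F$, so $\mathrm{i}(F,(p_j,q_j))\ne 0$; in particular each $(p_j,q_j)$ is isolated in $F^{-1}(0,0)$, and by Lemma~\ref{SVDAEs:mr}(1) the trivial pair $(0,(\overline{p}_j,\overline{q}_j))$ is isolated in the slice $X_0$ of the set $X\subseteq[0,\infty)\X C_T(U)$ of all $T$-periodic pairs of \eqref{SVDAEs:0a}. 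Hence each $(p_j,q_j)$ is an ejecting point for $X$ by the preceding Corollary, so there is a connected subset $\Gamma_j$ of $X$ meeting $(0,(\overline{p}_j,\overline{q}_j))$ and not contained in $X_0$.

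Next I would set up the bookkeeping that produces the count $r+1$. Since \eqref{SVDAEs:0} admits no unbounded connected set of $T$-periodic solutions in $C_T(U)$, and since $M$ is closed in $\rkrs$ (so that, via Lemma~\ref{obs1}'s completeness/Ascoli argument, bounded closed sets of $T$-periodic pairs are compact), each $\Gamma_j$ that is bounded must — by the dichotomy in Lemma~\ref{SVDAEs:mr}(2) — meet $F^{-1}(0,0)$ at some trivial pair other than $(0,(\overline{p}_j,\overline{q}_j))$. The key point is then to show that the $\Gamma_j$'s cannot all ``cancel out'': if for every small $\lambda>0$ the equation \eqref{SVDAEs:0a} had at most $r$ distinct $T$-periodic solutions, then for such $\lambda$ the slices $(\Gamma_j)_\lambda$ would all have to coincide, forcing the bounded components among the $\Gamma_j$ to be joined into a single connected set $\Gamma$ passing through all the $(0,(\overline p_j,\overline q_j))$ and possibly through further trivial pairs. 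I would then compute a degree along $\Gamma$: choosing an open $\Omega\subseteq[0,\infty)\X C_T(U)$ with $\Omega\cap U$ a small neighborhood of $\{(p_1,q_1),\dots,(p_r,q_r)\}$ inside the set of isolated zeros, additivity of the degree gives $\deg(F,\Omega\cap U)=\sum_{j=1}^r \mathrm{i}(F,(p_j,q_j))$, and — because $\Gamma$ is bounded and cannot escape $\Omega$ without hitting a trivial pair outside it, contradicting the ``at most $r$ solutions'' hypothesis together with the no-unbounded-continuum hypothesis — one derives that this degree must equal $\deg(F,U)$, contradicting the assumption $\deg(F,U)\ne\sum_{j=1}^r\mathrm{i}(F,(p_j,q_j))$.

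Concretely, I would argue by contradiction: suppose that for a sequence $\lambda_n\downarrow 0$ there are at most $r$ distinct $T$-periodic solutions of \eqref{SVDAEs:0a}. For $n$ large, each ejecting continuum $\Gamma_j$ meets the slice at level $\lambda_n$ (ejecting points do eject into positive $\lambda$), and with only $r$ solutions available at level $\lambda_n$ the continua emanating from the $r$ distinct trivial pairs must share endpoints, so their union $\Gamma:=\bigcup_j\Gamma_j$ is connected, meets all $r$ trivial pairs, and — by the no-unbounded-continuum assumption — is bounded, hence compact. By Lemma~\ref{SVDAEs:mr}(2) applied at each $(p_j,q_j)$, compactness forces $\Gamma$ to meet the trivial pairs outside $\{(p_1,q_1),\dots,(p_r,q_r)\}$; taking $\Omega$ to be $[0,\infty)\X C_T(U)$ minus those extra trivial pairs, $\Gamma\cap\Omega$ is noncompact while $\Omega\cap U$ still contains all the $(p_j,q_j)$, so $\deg(F,\Omega\cap U)=\sum_{j=1}^r\mathrm{i}(F,(p_j,q_j))$ by additivity and excision. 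But by Theorem~\ref{SVDAEs:main-thm} the hypothesis $\deg(F,U)\ne\sum_j\mathrm{i}(F,(p_j,q_j))$ means $\deg(F,\Omega\cap U)\ne\deg(F,U)$, so there is a further zero of $F$ in $U\setminus(\Omega\cap U)$ from which, again by Theorem~\ref{SVDAEs:main-thm}, a further continuum of nontrivial $T$-periodic pairs emanates; this continuum provides an $(r+1)$-st solution at small $\lambda>0$ unless it, too, is absorbed — and tracking this absorption to its end produces either an unbounded continuum (excluded) or the required contradiction with the solution count. The main obstacle, and the step requiring the most care, is precisely this last combinatorial argument: making rigorous the claim that ``at most $r$ solutions for small $\lambda$'' forces the $r$ ejecting continua to merge and then deriving the degree identity $\deg(F,U)=\sum_{j=1}^r\mathrm{i}(F,(p_j,q_j))$ from the resulting global structure; everything else is a direct application of the results already proved.
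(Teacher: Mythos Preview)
The paper's own proof is a two-line reduction: Proposition~\ref{SVDAEs:equivalence-T-res} shows that the notion of non-$T$-resonant zero for \eqref{SVDAEs:0a} coincides with that for the autonomous DAE \eqref{SVDAEs:0-non-a}, and with this identification (together with the correspondence of $T$-periodic solutions in Remarks~\ref{first-remark}--\ref{SVDAEs:remark-on-ivp}) all hypotheses of \cite[Proposition~5.7]{spaDAE} are satisfied, so the conclusion is quoted from there. What you are attempting is essentially to reprove that cited proposition from scratch.

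Your overall architecture---ejecting points from Lemma~\ref{SVDAEs:mr} plus a global degree argument on a complementary region---is the right one, but the contradiction argument you sketch has a genuine gap at exactly the step you flag. From ``at most $r$ solutions at level $\lambda_n$'' together with ``each $\Gamma_j$ meets level $\lambda_n$'' it does \emph{not} follow that the $\Gamma_j$ share points: with $r$ continua and $r$ available solutions, pigeonhole gives nothing---each $\Gamma_j$ may land on a different solution, so $\bigcup_j\Gamma_j$ need not be connected and the degree identity you want to extract from merging never gets off the ground. There is a second problem: you invoke the no-unbounded-continuum hypothesis to conclude that $\Gamma=\bigcup_j\Gamma_j$ is bounded, but that hypothesis concerns only $T$-periodic solutions of \eqref{SVDAEs:0}, i.e.\ the slice $\lambda=0$, and says nothing about boundedness of $\Gamma$ in the region $\lambda>0$.

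The standard route (as in \cite{spaDAE}) avoids merging altogether. One first fixes pairwise disjoint open neighborhoods $W_j$ of the trivial pairs $(0,(\overline p_j,\overline q_j))$ with $W_j\cap X_0=\{(0,(\overline p_j,\overline q_j))\}$, which is possible by Lemma~\ref{SVDAEs:mr}(1); a connectedness/boundary-crossing argument then shows that the branch through $(p_j,q_j)$ meets every sufficiently small positive $\lambda$-slice \emph{inside} $W_j$, already yielding $r$ distinct solutions for small $\lambda>0$. The $(r{+}1)$-st solution is obtained separately, by applying the continuation principle on the complement: one takes $V$ so that $\deg(F,V)=\deg(F,U)-\sum_j\mathrm{i}(F,(p_j,q_j))\neq 0$, and uses the hypothesis on \eqref{SVDAEs:0} to force the resulting branch out of the slice $\lambda=0$. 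This is precisely the content of \cite[Proposition~5.7]{spaDAE}, which is why the paper simply cites it via Proposition~\ref{SVDAEs:equivalence-T-res}.
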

\begin{proof}
  We apply Proposition \ref{SVDAEs:equivalence-T-res} and
  \cite[Proposition\ 5.7]{spaDAE}.
\end{proof}

\begin{example} 
  Consider the following DAE in $U = (-\frac{3}{2}, \infty)\X
  (-\frac{3}{2}, \infty)$
  \begin{equation} \label{SVDAEs:mult-ex} \left\{ \begin{array}{l}
        \dot x = a(t) \big(xy^2 - x^2 y) + \lambda h(t, x, y),\\
        y - x^3 + \varepsilon y^3 = 0,
      \end{array}\right.
  \end{equation}
  where $0<\varepsilon<1$ is a given constant, $a\colon \R \to (0,
  \infty)$ and $h \colon \R\X (\R \X\R) \to \R$ are continuous and
  $T$-periodic, with respect to the variable $t$. Actually, we require
  that $h\in C^1$.  As usual, we assume that
  $\frac{1}{T}\int_0^Ta(t)dt=1$.  It is immediate to check that the
  zero set of $F(p, q) = (pq^2- p^2q, q- p^3 + \varepsilon q^3)$ is
  formed by the points $(0, 0)$, $(a_\varepsilon, a_\varepsilon )$ and
  $(- a_\varepsilon, -a_\varepsilon)$, where $a_\varepsilon :=
  1/\sqrt{1-\varepsilon}$, and that just the first of them is
  $T$-resonant.  Again, it can be easily seen that, in the case
  $\lambda = 0$, the only possible $T$-periodic solutions of
  \eqref{SVDAEs:mult-ex} are those corresponding to the zeros of $F$.

  Consider the homotopy $H \colon U \X [0, 1] \to \R \X \R$, defined
  by
  \begin{equation*}
    H(p, q; \rho) = (q^2p - p^2q, q - p^3 +(1 -\rho)\varepsilon q^3 - \rho), 
  \end{equation*}
  which is admissible in the sense that the set $\{(p,q)\in U \colon
  H(p,q; \rho)=0 \,\, \textrm{for}\,\, \textrm{some}\,\, \rho\}$ is
  compact.  Obviously, $H(p, q; 1) \ne 0$ for any $(p, q) \in U$, then
  it follows that $\deg (H(\cdot, \cdot; 1),U) = 0$.  By the Homotopy
  Invariance property of the degree, one has that $\deg (F,U) = 0$.

  Since $( a_{\varepsilon}, a_{\varepsilon})$ and $(- a_{\varepsilon},
  - a_{\varepsilon})$ are non-$T$-resonant zeros of $F$, they are
  nondegenerate, and in particular it holds true that $\emph{i} (F, (
  a_{\varepsilon}, a_{\varepsilon})) = \emph{i} (F, (-
  a_{\varepsilon}, - a_{\varepsilon})) =-1$.  Therefore, Proposition
  \ref{SVDAEs:last-prop} ensures the existence of at least three
  $T$-periodic solutions of \eqref{SVDAEs:mult-ex}, for sufficiently
  small $\lambda > 0$.
\end{example}
\smallskip

\noindent\textbf{Acknowledgements.}
The author would like to thank M.\ Spadini for many valuable
discussions and suggestions.

\end{document}